\newtheorem{theorem}{Theorem}
\newtheorem{lemma}[theorem]{Lemma}
\newtheorem{proposition}[theorem]{Proposition}
\newtheorem{corollary}[theorem]{Corollary}
\theoremstyle{definition}
\newtheorem{definition}[theorem]{Definition}
\newtheorem{remark}[theorem]{Remark}
\newcommand\ord{{\rm ord}}
\begin{document}
\title{A Cauchy-Davenport theorem for semigroups}
\author{Salvatore Tringali}
\address{Laboratoire Jacques-Louis Lions,
Universit\'e Pierre et Marie Curie, 4 place Jussieu, 75005 Paris cedex 05, France.}
\email{tringali@ann.jussieu.fr.}
%
%
\thanks{This research was funded from the
European Community's 7th Framework Programme (FP7/2007-2013) under Grant
Agreement No. 276487 (project ApProCEM), and partly from the ANR Project No. ANR-12-BS01-0011 (project Caesar).}
\subjclass[2010]{Primary 05E15, 11B13; secondary 20E99, 20M10}
\keywords{Additive theory, Cauchy-Davenport theorem, Chowla's theorem, Davenport transform, difference sets,
groups, Kemperman's theorem, semigroups, structure theory, sumsets, torsion-free, transformation
proofs.}
\begin{abstract}
We generalize the Davenport transform and use it to prove that, for
a (possibly non-commutative) cancellative semigroup $\mathbb A = (A, +)$ and non-empty
subsets $X,Y$ of $A$ such that the sub\-sem\-i\-group generated by $Y$ is commutative, we have $|X + Y| \ge \min(\omega(Y), |X| + |Y| - 1)$, where
\begin{displaymath}
\omega(Y) := \sup_{y_0 \in Y \cap \mathbb A^{\times}}
\inf_{y \in Y \setminus \{y_0\}} |\langle y - y_0 \rangle|.
\end{displaymath}
This carries over the Cauchy-Davenport theorem to the broader setting of semigroups, and it implies, in particular, an extension of I. Chowla's and S.S. Pillai's theorems for cyclic groups and a notable strengthening of another generalization of the same Cauchy-Davenport theorem to commutative groups, where $\omega(Y)$ in the above is replaced by the minimal order of the non-trivial subgroups of $\mathbb A$.
\end{abstract}

\maketitle

\section{Introduction}\label{sec:intro}
The present paper deals with the structure theory of \textit{semigroups}. We refer to \cite{BourSetTh}, \cite[Chapter I, Sections 1-2, 4, and 6]{BourAlgI}, and \cite[Chapter 1]{Howie96} for all necessary prerequisites as well as for notation and terminology used but not defined here.

Semigroups are a natural framework for developing large parts of
theories traditionally presented in much less general contexts.
Not only this can suggest new directions of research and shed light on questions primarily focused on groups, but it also makes methods and results otherwise restricted to ``richer
settings'' applicable to significantly larger
classes of problems.

Here, a semigroup is a pair $\mathbb A = (A, +)$ consisting of a (possibly empty) set $A$, referred to as the carrier of $\mathbb A$, and an associative binary operation $+$ on $A$
(unless otherwise specified, all semigroups considered below are written
additively, but are not necessarily commutative).

Given subsets $X, Y$
of $A$, we define as usual the sumset, relative to $\mathbb A$, of the pair $(X,Y)$ as the set $
 X + Y := \{x + y: x \in X, y \in Y\}$,
which is written as $x + Y$ (respectively, $X + y$) if $X = \{x\}$ (respectively, $Y = \{y\}$). Furthermore, we extend the notion of difference set from groups to semigroups by
\begin{displaymath}
X - Y := \{z \in A: (z+Y) \cap X \ne \emptyset\},\quad -X + Y := \{z \in A: (X + z) \cap Y \ne \emptyset\}.
\end{displaymath}
Expressions of the form $Z_1 + \cdots + Z_n$ or $\sum_{i=1}^n Z_i$, involving one or more summands, as well as $-x+Y$ and $X-y$ for $x,y \in A$ are defined in a similar way; in particular, we use $nZ$ for $Z_1 + \cdots + Z_n$ if $Z_1 = \cdots = Z_n =: Z$.

We say that $\mathbb A$ is unital, or a \textit{monoid}, if there exists $0 \in A$ such that $z + 0 = 0 + z = z$ for all $z$; when this is the case, $0$ is unique and called \textit{the} identity of $\mathbb A$.
Then, we let $\mathbb A^\times$ be the set of units of $\mathbb A$, in such a way that $\mathbb A^\times := \emptyset$ if $\mathbb A$ is not
a monoid; this is simply denoted as $A^\times$ if there is no likelihood of confusion. If $\mathbb A$ is unital with identity $0$, a \textit{unit} of $\mathbb A$
is now an element $z$ for which there exists $\tilde z$, provably unique and called
\textit{the} inverse of $z$ in $\mathbb A$, such that $z + \tilde z =
\tilde z + z = 0$. Moreover, for $Z \subseteq A$ we write
$\langle Z \rangle_\mathbb{A}$ for the smallest subsemigroup of $\mathbb A$
containing $Z$, and given $z \in A$ we use $\langle z \rangle_\mathbb{A}$ for $\langle \{z\} \rangle_\mathbb{A}$ and $\ord_\mathbb{A}(z)$
for the order of $z$ in $\mathbb A$, that is $\ord_\mathbb{A}(z)
:= |\langle z \rangle_\mathbb{A}|$, so generalizing the notion of order for the elements of a group. Here and later, the subscript `$\mathbb A$' may be omitted if $\mathbb A$ is clear from the context. Finally, we say that $\mathbb A$ is cancellative if for $x,y,z \in A$ it holds $z+x=z+y$ or $x+z=y+z$ only if $x = y$; we notice that any group is a cancellative monoid.

Sumsets in (mostly commutative) groups have been intensively investigated
for several years (see \cite{Ruzsa09} for a recent survey), and
interesting results have been also obtained in the case of commutative cancellative monoids
\cite{Gerold09} (in A. Geroldinger's work these are simply termed monoids). The present paper aims to extend aspects of
the theory to the more general setting of possibly \textit{non-commutative} semigroups.

Historically, the first significant achievement in the field is
probably the Cauchy-Davenport theorem, originally established by A.-L.~Cauchy
\cite{Cauchy1813} in 1813, and independently rediscovered by H.~Davenport
\cite{Daven35,Daven47} more than a century later:
\begin{theorem}[Cauchy-Davenport theorem]
\label{th:basic_CD}
Let $(A,+)$ be a group of prime order $p$
and $X,Y$ non-empty subsets of $A$. Then, $|X+Y| \ge \min(p,
|X|+|Y|-1)$.
\end{theorem}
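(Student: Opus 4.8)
The plan is to prove the statement by the \emph{Davenport transform}, which fits naturally with the paper's stated goal of generalizing that tool. First I would dispose of the case $|X| + |Y| > p$: since $(A,+)$ is a group, for every $g \in A$ the sets $X$ and $\{g - y : y \in Y\}$ have cardinalities summing to more than $p = |A|$, so they must intersect, which gives $g \in X + Y$; hence $X + Y = A$ and $|X+Y| = p = \min(p, |X|+|Y|-1)$. Thus I may assume $|X| + |Y| - 1 \le p$, and it suffices to prove $|X+Y| \ge |X| + |Y| - 1$.

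I would then induct on $|Y|$. For the base case $|Y| = 1$, the sumset $X + Y$ is a single translate of $X$, so $|X+Y| = |X| = |X| + |Y| - 1$. For the inductive step, assume $|Y| \ge 2$; if $X + Y = A$ there is nothing to prove, so suppose $X + Y \ne A$. For $e \in A$, form the transformed pair $X' := X \cup (e + Y)$ and $Y' := Y \cap (-e + X)$. A direct verification, using the commutativity of $A$, shows $X' + Y' \subseteq X + Y$; and from the identity $|X \cup (e+Y)| = |X| + |Y| - |X \cap (e+Y)|$ together with $|Y'| = |X \cap (e+Y)|$, one gets $|X'| + |Y'| = |X| + |Y|$. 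If $e$ can be chosen so that $\emptyset \ne Y' \subsetneq Y$, then $|Y'| < |Y|$ and the inductive hypothesis applies to $(X', Y')$, yielding $|X+Y| \ge |X'+Y'| \ge |X'| + |Y'| - 1 = |X| + |Y| - 1$, as wanted.

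The crux, and the step I expect to be the main obstacle, is producing such an $e$; this is exactly where the primality of $p$ enters. I would argue by contradiction: if no admissible $e$ exists, then for each $e$ either $(e+Y) \cap X = \emptyset$ or $e + Y \subseteq X$, so every $e$ with $(e+Y) \cap X \ne \emptyset$ in fact satisfies $e + Y \subseteq X$. Fixing $y_0 \in Y$ and taking $e = x - y_0$ for each $x \in X$, this forces $x + (y - y_0) \in X$ for all $x \in X$ and $y \in Y$, so $X$ is invariant under the subgroup $H$ generated by $\{y - y_0 : y \in Y\}$. Since $|Y| \ge 2$, the group $H$ is non-trivial, and because $p$ is prime the only such subgroup is $A$ itself; hence $X + A = X$ forces $X = A$ and thus $X + Y = A$, contradicting our assumption. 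This contradiction secures the existence of the transform element and closes the induction.
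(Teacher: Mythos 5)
Your proof is correct, but one label needs fixing: the map $(X,Y)\mapsto\bigl(X\cup(e+Y),\,Y\cap(-e+X)\bigr)$ that you use is the \emph{Dyson} $e$-transform, not the Davenport transform --- the paper explicitly contrasts the two at the end of Section~\ref{sec:statements}, citing \cite[p.~42]{Natha}. Your route is also genuinely different from the paper's: there, Theorem~\ref{th:basic_CD} is not proved directly but falls out as the special case $\mathbb A=\mathbb Z/p\mathbb Z$ of Theorem~\ref{th:strong_main}, via Corollary~\ref{prop:groupal_case} and Lemma~\ref{lem:simple_lemma} (since $\omega(Y)\ge\mathfrak{p}(\mathbb A)=p$). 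The engine of that proof is the (generalized) Davenport transform: for a minimal counterexample one normalizes $W_0:=Y-\tilde y_0\ni 0$, and either $X+2W_0\subseteq X+W_0$, in which case $X+W_0=X+\langle W_0\rangle$ already has at least $\min_{w\ne 0}\ord(w)$ elements by Lemma~\ref{lem:incremental}, or one picks $z\in(X+2W_0)\setminus(X+W_0)$ and deletes from $W_0$ precisely those $w$ with $z\in X+W_0+w$, leaving $X$ untouched; the transformed pair obeys the submodular-type inequality $|X+W_0'|+|W_0|\le|X+W_0|+|W_0'|$ of Proposition~\ref{prop:properties_of_the_modified_Davenport_transform}\ref{item:D_transform_v}, rather than your conservation law $|X'|+|Y'|=|X|+|Y|$. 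The dichotomies are parallel in spirit (your failure case makes $X$ a union of cosets of the subgroup generated by $Y-y_0$; the paper's failure case makes $X+W_0$ absorb $\langle W_0\rangle$), but what each buys differs. Your Dyson argument is shorter and classical, yet it uses group structure at every turn --- forming $X\cup(e+Y)$, choosing $e=x-y_0$, invoking Lagrange --- and exactly this free subtraction breaks down in a cancellative semigroup, where $|X-z|$ may differ wildly from $|X|$ (see the discussion opening Section~\ref{sec:davenport_transform}). The Davenport transform modifies only $Y$, requires just one unit $\tilde y_0\in Y\cap A^\times$ and commutativity of $\langle Y\rangle$, and is therefore what lets the paper reach the semigroup setting with the sharper threshold $\omega(Y)$ in place of $p$.
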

The result has been the subject of numerous papers, and
received many different proofs, each favoring alternative points of view
and eventually leading to  progress on analogous questions. In fact, the main contribution here is an extension of Theorem \ref{th:basic_CD} to cancellative semigroups (this is stated in Section \ref{sec:statements}).

The Cauchy-Davenport theorem applies especially to the
additive group of the integers modulo a prime.
Extensions to composite moduli have been given by several authors, and notably by I. Chowla \cite{Chowla35} and S.S. Pillai \cite{Pillai}. These results, reported below for the sake of exposition and used by Chowla and Pillai in relation to Waring's problem, are further strengthened, in Section \ref{sec:statements}, by Corollary \ref{th:chowla_plus}, which can be viewed as a common generalization of both of them, and whose proof is sensibly shorter than each of the proofs appearing in \cite{Chowla35} and \cite{Pillai} (not to mention that it comes as a by-product of a deeper result). Here and later, for $m \in \mathbb N^+$ we write, as usual, $\mathbb Z/m\mathbb Z$ for the integers modulo $m$, endowed with their usual additive and multiplicative structure.

\begin{theorem}[Chowla's theorem]
\label{th:chowla_theorem}
Let $m$ be an integer $\ge 1$. If $X,Y$ are non-empty subsets of $\mathbb Z/m\mathbb Z$ such that $0
\in Y$ and $\gcd(m,y) = 1$ for each $y \in Y \setminus \{0\}$, then $|X+Y|
\ge \min(m, |X|+|Y|-1)$.
\end{theorem}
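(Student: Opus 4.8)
The plan is to fix $X$ and induct on $|Y|$, shrinking $Y$ by a Dyson-type $e$-transform engineered so that Chowla's hypotheses pass to the transformed pair. Write $G := \mathbb{Z}/m\mathbb{Z}$. If $|Y| = 1$, then $Y = \{0\}$, so $X + Y = X$ and $|X+Y| = |X| = \min(m, |X|)$, settling the base case. Assume now $|Y| \ge 2$. If $X + Y = G$ there is nothing to prove, since then $|X+Y| = m \ge \min(m, |X|+|Y|-1)$; so I may assume $X + Y \subsetneq G$.

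Since $|Y| \ge 2$, choose $y_1 \in Y \setminus \{0\}$; by hypothesis $\gcd(m, y_1) = 1$, so $y_1$ generates $G$, i.e. $\langle y_1 \rangle = G$. I first claim there is $e \in X$ with $Y + e \not\subseteq X$. Indeed, were $Y + e \subseteq X$ for all $e \in X$, we would have $X + Y \subseteq X$, and since $0 \in Y$ forces $X \subseteq X + Y$, this gives $X + Y = X$; then $X + y_1 \subseteq X$, and iterating together with $\langle y_1 \rangle = G$ yields $X = G$, whence $X + Y = G$, contrary to assumption. Fixing such an $e$, I set $X_e := X \cup (Y + e)$ and $Y_e := Y \cap (X - e)$, and I record the following facts, each verified by routine bookkeeping: (i) $0 \in Y_e$, because $e \in X$ and $0 \in Y$; (ii) $Y_e \subseteq Y$, so that $Y_e$ still contains $0$ and has all its nonzero elements coprime to $m$; (iii) $Y_e \subsetneq Y$, by the defining property of $e$, hence $1 \le |Y_e| < |Y|$; (iv) $X_e + Y_e \subseteq X + Y$, using commutativity of $G$; and (v) $|X_e| + |Y_e| = |X| + |Y|$, since the map $z \mapsto z - e$ is a bijection from $X \cap (Y+e)$ onto $Y_e$.

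Granting (i)--(v), the pair $(X_e, Y_e)$ satisfies the hypotheses of the theorem with $|Y_e| < |Y|$, so the inductive hypothesis applies and gives $|X+Y| \ge |X_e + Y_e| \ge \min(m, |X_e| + |Y_e| - 1) = \min(m, |X| + |Y| - 1)$, which closes the induction. The main obstacle, and the real content of the argument, is not any individual verification but the simultaneous fulfilment of (ii)--(v): the transform must strictly reduce $|Y|$ while retaining $0$, preserving coprimality of the nonzero elements, keeping the sumset inside $X + Y$, and leaving $|X_e| + |Y_e|$ unchanged. The coprimality hypothesis is used exactly once --- to secure a generator of $G$ inside $Y \setminus \{0\}$, which is what guarantees the existence of a suitable $e$ and rules out the degenerate situation $X + Y = X \subsetneq G$. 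This is precisely the mechanism that the paper's generalized Davenport transform is meant to abstract, with $\gcd(m, y) = 1$ playing in $\mathbb{Z}/m\mathbb{Z}$ the role of the quantity $\omega(Y)$ from the abstract.
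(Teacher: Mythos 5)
Your proof is correct, but it takes a genuinely different route from the paper. The paper never proves Theorem \ref{th:chowla_theorem} directly: it derives the stronger Corollary \ref{th:chowla_plus} from the main semigroup result, Theorem \ref{th:strong_main} --- proved via the generalized Davenport transform, which keeps $X$ fixed and deletes from $Y$ the slice $\tilde Y_z$ attached to a witness $z \in (X+2Y)\setminus(X+Y)$ --- specialized to groups through Corollary \ref{prop:groupal_case} together with the computation $\omega(Y)=\delta_Y^{-1}m$ in $\mathbb Z/m\mathbb Z$; Chowla's theorem is then the special case $y_0=0$, $\delta_Y=1$. You instead run the classical Dyson $e$-transform directly in $\mathbb Z/m\mathbb Z$, and your verifications (i)--(v) all check out: in particular $z\mapsto z-e$ does biject $X\cap(Y+e)$ onto $Y_e$, giving $|X_e|+|Y_e|=|X|+|Y|$, and coprimality is used exactly once, to extract a generator $y_1$ of $G$ and rule out the stuck case $X+Y=X\subsetneq G$. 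One phrasing slip: ``fix $X$ and induct on $|Y|$'' is not literally what you do, since the inductive step replaces $X$ by $X_e$; the induction hypothesis must quantify over all nonempty $X$, which is how your argument in fact applies it, so this is cosmetic rather than a gap. As for what each approach buys: your argument is shorter and self-contained, but it leans on precisely what the paper must avoid --- subtraction (the sets $X-e$ and $Y\cap(X-e)$) and the transform's enlargement of $X$, neither of which survives in a cancellative semigroup, where cardinality is not preserved under ``subtraction'' (cf.\ the discussion after Lemma \ref{lem:upper_bounds} and the opening of Section \ref{sec:davenport_transform}); the paper's heavier machinery, by contrast, yields the $\omega(Y)$ bound, hence the Pillai-type strengthening in Corollary \ref{th:chowla_plus} and the semigroup-level Theorem \ref{th:strong_main}, none of which follow from your induction as written, since you use that $y_1$ generates all of $G$ rather than merely a subgroup of controlled order.
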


\begin{theorem}[Pillai's theorem]
\label{th:pillai_theorem}
Given an integer $m \ge 1$, pick non-empty subsets $X,Y$ of $\mathbb Z/m\mathbb Z$. Let $\delta$ be the maximum of $\gcd(m, y - y_0)$ for distinct $y,y_0 \in Y$
if $|Y| \ge 2$, and set $\delta := 1$ otherwise. Then, $|X+Y|
\ge \min(\delta^{-1} m, |X|+|Y|-1)$.
\end{theorem}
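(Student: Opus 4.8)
The plan is to deduce the statement directly from our generalization of the Cauchy--Davenport theorem (the bound $|X+Y| \ge \min(\omega(Y), |X|+|Y|-1)$ announced in the abstract), applied to the cancellative commutative group $\mathbb{A} := \mathbb{Z}/m\mathbb{Z}$. Since $\mathbb{A}$ is a group, every element is a unit, so $A^\times = A$, and the subsemigroup $\langle Y \rangle$ is automatically commutative; hence the hypotheses of that theorem hold for any non-empty $X,Y$, and it yields $|X+Y| \ge \min(\omega(Y), |X|+|Y|-1)$. Everything then reduces to proving $\omega(Y) \ge \delta^{-1} m$, after which the conclusion follows from the monotonicity of $\min$.

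To estimate $\omega(Y)$, I would first recall that in $\mathbb{Z}/m\mathbb{Z}$ the order of an element $a$ equals $m/\gcd(m,a)$, so that $|\langle y - y_0 \rangle| = m/\gcd(m, y - y_0)$ for all $y,y_0 \in A$. Assume $|Y| \ge 2$ and fix $y_0 \in Y$. As $Y$ is finite, the inner infimum in the definition of $\omega(Y)$ is attained, and a larger gcd yields a smaller order; setting $d(y_0) := \max_{y \in Y \setminus \{y_0\}} \gcd(m, y - y_0)$, this gives
\begin{displaymath}
\inf_{y \in Y \setminus \{y_0\}} |\langle y - y_0 \rangle| = \frac{m}{\max_{y \in Y \setminus \{y_0\}} \gcd(m, y - y_0)} = \frac{m}{d(y_0)}.
\end{displaymath}
Taking the supremum over $y_0 \in Y$ turns the maximum in the denominator into a minimum, so $\omega(Y) = m / \min_{y_0 \in Y} d(y_0)$. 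Since $\delta = \max_{y_0 \in Y} d(y_0)$ is exactly the largest of the $\gcd(m, y - y_0)$ over all distinct pairs, we have $\min_{y_0} d(y_0) \le \delta$, whence $\omega(Y) \ge \delta^{-1} m$, as desired.

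It remains to treat the degenerate case $|Y| = 1$, where $\delta := 1$ by convention. Then $Y \setminus \{y_0\}$ is empty, the inner infimum is $+\infty$, and the theorem gives $|X+Y| \ge |X|$; since $X+Y$ is a translate of $X$ and $|X| \le m = \delta^{-1} m$, this is precisely $\min(\delta^{-1} m, |X|+|Y|-1)$. I expect the only genuine work to lie in the bookkeeping of the $\sup$--$\inf$ in $\omega(Y)$: the subtle point is that $\omega(Y)$ involves the \emph{minimum} over base points $y_0$ of the largest admissible gcd, whereas Pillai's $\delta$ is the \emph{global} maximum, so the two need not agree. It is exactly this gap that makes the present conclusion a true strengthening rather than a mere reformulation.
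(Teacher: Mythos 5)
Your proposal is correct and follows essentially the same route as the paper: Pillai's theorem is obtained there as an immediate consequence of Corollary \ref{th:chowla_plus}, whose proof applies Corollary \ref{prop:groupal_case} (i.e., Theorem \ref{th:strong_main} specialized to groups) to $\mathbb{Z}/m\mathbb{Z}$ and computes $\omega(Y) = \delta_Y^{-1}m$ via $\ord(y-y_0) = m/\gcd(m,y-y_0)$, exactly as you do, with Pillai's bound then following because the paper's $\delta_Y$ (a $\min$--$\max$) is at most Pillai's global maximum $\delta$. Your handling of the $|Y|=1$ case and your closing remark about the $\sup$--$\inf$ bookkeeping being the source of the genuine strengthening match the paper's own observation following Corollary \ref{th:chowla_plus}.
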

A partial account of further results in the same spirit can be found in
\cite[Section 2.3]{Natha}, along with an entire chapter dedicated to
Kneser's theorem \cite[Chapter 4]{Natha}, which, among the other
things, implies Theorem \ref{th:chowla_theorem} (and then also Theorem \ref{th:basic_CD}); see \cite[Section 4.6, Exercises 5 and 6]{Natha}.
Generalizations of the Cauchy-Davenport theorem of a somewhat different flavor have been furnished, still in recent
years, by several authors.

For, assume for the rest of the paper that $\mathbb A = (A, +)$ is a fixed, arbitrary semigroup (unless differently specified), and let $0$ be the identity of the unitization, $\mathbb A^{(1)}$, of $\mathbb A$: If $\mathbb A$ is \textit{not} unital, $\mathbb A^{(1)}$ is the pair $(A \cup \{A\}, +)$, where $+$ is, by an abuse of notation, the unique extension of $+$ to a binary operation on $A \cup \{A\}$ for which $A$ serves as an identity (note that $A \notin A$, so loosely speaking we are just adjoining a distinguished element to $A$ and extending the structure of $\mathbb A$ in such a way that the outcome is a monoid whose identity is the adjoined element); otherwise $\mathbb A^{(1)} := \mathbb A$ (cf. \cite[p. 2]{Howie96}). We denote by $\mathfrak{p}(\mathbb A)$ the minimum of $\ord_{\mathbb A^{(1)}}(z)$ as $z$ ranges in the carrier of $\mathbb A^{(1)}$ and $z \ne 0$, with the convention that $\mathfrak{p}(\mathbb A):= |\mathbb N|$ if $\mathbb A^{(1)} = \{0\}$, namely
$\mathbb A^{(1)}$ is trivial. Then we have:
\begin{theorem}[folklore]
\label{th:karolyi_abelian}
If $\mathbb A$ is a commutative group and $X,Y$ are non-empty subsets of
$A$, then $|X+Y| \ge \min(\mathfrak{p}(\mathbb A), |X| + |Y| - 1)$.
\end{theorem}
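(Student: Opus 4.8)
The plan is to deduce the bound from \emph{Kneser's theorem} (see \cite[Chapter 4]{Natha}), which for finite non-empty subsets $X,Y$ of a commutative group asserts that $|X+Y| \ge |X+H| + |Y+H| - |H|$, where $H := \{g \in A : g + (X+Y) = X+Y\}$ is the stabilizer of the sumset $X+Y$ and is a subgroup of $\mathbb A$. First I would dispose of the degenerate cases: if $X$ or $Y$ is infinite, then translating by a fixed element of the other set (an injective map, since $\mathbb A$ is a group) gives $|X+Y| \ge \max(|X|,|Y|)$, which equals $|X|+|Y|-1$ as a cardinal whenever one of the two sets is infinite; hence the inequality holds, and I may assume from now on that $X$ and $Y$ are finite and non-empty, so that $X+Y$ is finite as well.

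Next I would apply Kneser's theorem and split on the stabilizer $H$. Since $0 \in H$, we have $X \subseteq X+H$ and $Y \subseteq Y+H$, so $|X+H| \ge |X|$ and $|Y+H| \ge |Y|$. If $H = \{0\}$, then $X+H = X$ and $Y+H = Y$, and Kneser's inequality reduces to $|X+Y| \ge |X|+|Y|-1 \ge \min(\mathfrak p(\mathbb A),\,|X|+|Y|-1)$. If instead $H \ne \{0\}$, then $X+Y$ is a non-empty union of cosets of $H$ and therefore contains at least one full coset, whence $|X+Y| \ge |H|$.

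It remains to compare $|H|$ with $\mathfrak p(\mathbb A)$ in the second case. Here I would note that, $\mathbb A$ being a group, one has $\mathbb A^{(1)} = \mathbb A$, so that $\mathfrak p(\mathbb A)$ is exactly the least order of a non-zero element of $\mathbb A$. Choosing any $z \in H$ with $z \ne 0$, the cyclic subgroup $\langle z \rangle$ lies in $H$, and hence $|H| \ge |\langle z \rangle| = \ord_{\mathbb A}(z) \ge \mathfrak p(\mathbb A)$; put differently, $\mathfrak p(\mathbb A)$ coincides with the minimal order of a non-trivial subgroup of $\mathbb A$. Consequently $|X+Y| \ge |H| \ge \mathfrak p(\mathbb A) \ge \min(\mathfrak p(\mathbb A),\,|X|+|Y|-1)$, which finishes the argument. (I would also remark that, once the main theorem of Section \ref{sec:statements} is in hand, the statement drops out immediately by observing that every $y - y_0$ with $y \ne y_0$ is non-zero, so $\omega(Y) \ge \mathfrak p(\mathbb A)$.)

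The only real obstacle I anticipate is bookkeeping rather than substance: translating the element-order definition of $\mathfrak p(\mathbb A)$ into the subgroup-order estimate used in the non-trivial-stabilizer case, and verifying coherence of the infinite regime. In particular, when $\mathbb A$ is torsion-free one has $\mathfrak p(\mathbb A) = |\mathbb N|$, and for finite $X,Y$ every non-trivial subgroup is infinite, so the stabilizer $H$ is forced to be trivial and the first case always applies — exactly as the bound predicts. Beyond invoking Kneser's theorem, no genuinely hard step arises.
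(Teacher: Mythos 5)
Your proof is correct and takes essentially the same approach as the paper: the paper gives no detailed argument for Theorem \ref{th:karolyi_abelian}, remarking only that it is a ``straightforward consequence of Kneser's theorem,'' and your Kneser-based case split on the stabilizer $H$ (together with the observation that, for a group, $\mathfrak{p}(\mathbb A)$ is the minimal order of a non-trivial subgroup, and the cardinal-arithmetic handling of infinite $X$ or $Y$) correctly supplies exactly those details. Your closing parenthetical likewise matches the paper's own alternative derivation via Theorem \ref{th:strong_main} and Lemma \ref{lem:simple_lemma}.
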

Theorem \ref{th:karolyi_abelian} is another (straightforward) consequence of Kneser's theorem. While it applies to both finite and infinite
\textit{commutative}
groups, an analogous result holds true for all groups:
\begin{theorem}[Hamidoune-K\'arolyi theorem]
\label{th:karolyi_finite}
If $\mathbb A$ is a group and $X,Y$ are non-empty subsets of
$A$, then $|X+Y| \ge \min(\mathfrak{p}(\mathbb A), |X| + |Y|-1)$.
\end{theorem}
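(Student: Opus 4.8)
The plan is to reduce everything to the commutative case, which is already settled by Theorem~\ref{th:karolyi_abelian}, and to isolate non-commutativity as the only genuine difficulty. First I would clear the routine reductions. The statement for infinite $X$ or $Y$ follows by applying the finite case to exhausting families of finite subsets, so I assume $X,Y$ finite; the case $|Y|=1$ is trivial, since then $|X+Y|=|X|=|X|+|Y|-1$; and replacing $(X,Y)$ by $(X+y_0,-y_0+Y)$ for a fixed $y_0\in Y$ I may assume $0\in Y$, this substitution leaving $|X+Y|$, $|X|$, $|Y|$, and $\mathfrak{p}(\mathbb A)$ untouched. Finally, since $Y\subseteq H:=\langle Y\rangle$ and the left cosets of $H$ partition $X+Y$, a coset-by-coset application shows it suffices to prove the theorem inside $H$; as $\mathfrak{p}(\mathbb A)\le\mathfrak{p}(H)$ for every subgroup $H$, I may assume $\mathbb A=\langle Y\rangle$.

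If $\mathbb A=\langle Y\rangle$ happens to be commutative, Theorem~\ref{th:karolyi_abelian} applies verbatim and yields the claim. (One can also route this through the paper's main theorem: in a group $y-y_0\ne 0$ whenever $y\ne y_0$, so $|\langle y-y_0\rangle|=\ord(y-y_0)\ge\mathfrak{p}(\mathbb A)$ for every such pair, whence
\begin{displaymath}
\omega(Y)\ \ge\ \mathfrak{p}(\mathbb A),
\end{displaymath}
and the bound $|X+Y|\ge\min(\omega(Y),|X|+|Y|-1)$ valid for commutative $\langle Y\rangle$ is at least as strong as the one sought.) Thus the entire substance of the theorem is the removal of commutativity.

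For the non-commutative core I would argue by contraposition, exactly as the abelian proof does: it suffices to show that if the Cauchy-Davenport bound is violated, $|X+Y|<|X|+|Y|-1<\infty$, then $\mathbb A$ contains a non-trivial element of order at most $|X+Y|$, which forces $\mathfrak{p}(\mathbb A)\le|X+Y|$ and hence the desired minimum. In the abelian world this element is supplied by Kneser's theorem: the stabilizer $H=\{g:g+(X+Y)=X+Y\}$ is then a finite subgroup with $|X+Y|=|X+H|+|Y+H|-|H|\ge|H|>1$. The whole problem is to manufacture such a small-order element without a global stabilizer. Here the plan, in keeping with the transform methods of the paper, is to induct on $|Y|$ using a one-sided Davenport transform performed \emph{along a single cyclic direction}: fixing $y_1\in Y\setminus\{0\}$ and transforming $(X,Y)$ only by right translations through the powers of $d:=y_1$, so that the one piece of commutativity ever invoked is that of the cyclic subgroup $\langle d\rangle$, which is automatic. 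Each step should either certify $|X+Y|\ge|X|+|Y|-1$ or strictly lower $|Y|$ while keeping the new sumset inside $X+Y$; when no further step is possible, the forced periodicity of $X+Y$ under $\langle d\rangle$ should yield $\ord(d)\le|X+Y|$.

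The main obstacle is precisely this last step — extracting a genuine finite cyclic obstruction from a blocked transform in the absence of the abelian stabilizer. I expect two points to demand real work. The first is the non-commutative bookkeeping: arranging the transform one-sidedly and along the fixed direction $\langle d\rangle$ so that the evolution of $|X|$, $|Y|$, and of the overlaps among the translates $X+kd$ can be controlled using only cyclic-group information. The second is the torsion-free case $\mathfrak{p}(\mathbb A)=|\mathbb N|$: there no finite-order $d$ can arise, the ``small subgroup'' alternative cannot occur, and one must instead show that the induction can terminate only with the full inequality $|X+Y|\ge|X|+|Y|-1$ — an argument that, even in isolation, is the delicate heart of the matter and is where I would expect to spend most of the effort.
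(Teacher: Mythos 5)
Your reductions are all sound --- exhaustion to the finite case, disposing of $|Y|=1$, normalizing $0\in Y$ by translation, passing to $H=\langle Y\rangle$ coset by coset with $\mathfrak{p}(\mathbb A)\le\mathfrak{p}(H)$, and settling the commutative case via Theorem \ref{th:karolyi_abelian} (or, equivalently, via Theorem \ref{th:strong_main} and Lemma \ref{lem:simple_lemma}) --- but they are the routine part, and the step you leave as a ``plan'' is the entire theorem. Moreover, the plan breaks at a concrete point. The engine of the transform method is Proposition \ref{prop:properties_of_the_modified_Davenport_transform}, points \ref{item:D_transform_iii} and \ref{item:D_transform_v}: the $|\tilde Y_z|$ recovered elements of $z-\tilde Y_z$ lie in $X+Y$ yet are disjoint from $X+Y_z$, and the disjointness argument commutes an \emph{arbitrary} $y_1\in Y_z$ past an \emph{arbitrary} $y_2\in\tilde Y_z$; it needs all of $\langle Y\rangle$ commutative, not just one cyclic direction. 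If you instead transform ``along $\langle d\rangle$'', say via some $z\in(X+Y+d)\setminus(X+Y)$ with $\tilde Y:=\{y\in Y: z\in X+y+d\}$, the disjointness does become commutativity-free, but you then recover only the single element $z-d$ while deleting all $|\tilde Y|$ elements from $Y$, so the key inequality $|X+Y|+|Y'|\ge|X+Y'|+|Y|$ fails whenever $|\tilde Y|\ge 2$ and the induction has no engine. (The ``blocked'' alternative is in fact the easy half and needs no commutativity: if $X+Y+d\subseteq X+Y$, then $x+y+\langle d\rangle\subseteq X+Y$ for any fixed $x+y$, whence $|X+Y|\ge\ord(d)$ by cancellativity; the hard case is a transform that is available but lossy.) You acknowledge this yourself in deferring ``the delicate heart of the matter'', but a proof cannot defer exactly the point at issue.

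It is also worth calibrating expectations against the paper itself: the paper does not prove Theorem \ref{th:karolyi_finite} at all. It quotes it with attributions --- K\'arolyi's proof for finite groups goes through the structure theory of group extensions and reduces to solvable groups via the Feit--Thompson theorem, and Hamidoune's proof of the general case uses the isoperimetric method; see \cite{Karo05.2}. Furthermore, at the end of Section \ref{sec:statements} the author explicitly poses the removal of the commutativity hypotheses from Corollary \ref{cor:2} --- precisely what your unfinished step would accomplish --- as an open question whose solution would subsume Theorem \ref{th:karolyi_finite}, adding that he ``has no clue'' how to bridge the gap within the transform framework. So the missing step in your proposal is not a technical loose end but an obstruction the known proofs overcome only with machinery of an entirely different order; as it stands, the proposal proves the commutative case (already Theorem \ref{th:karolyi_abelian}) and leaves the theorem itself unproved.
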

This was first proved by K\'arolyi in the case of finite groups, relying on the structure theory of
group extensions, by reduction to finite solvable groups in the light
of the Feit-Thompson theorem, and then by Hamidoune in the general case, based on the isoperimetric method; see \cite{Karo05.2} for details.

A further result from the literature that is significant in relation to the subject matter is due to J.H.B.~Kemperman \cite{Kemp56}, and reads as follows:
\begin{theorem}[Kemperman's inequality for torsion-free groups]
\label{th:CD_for_torsion-free_groups}
 Let $\mathbb A$ be a group, and let $X,Y$ be non-empty subsets of
$A$. Suppose that every non-zero element of $A$ has order $\ge
|X|+|Y|-1$ in $\mathbb A$. Then, $|X+Y| \ge |X| + |Y| - 1$.
\end{theorem}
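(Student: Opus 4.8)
The plan is to read off the inequality from the Hamidoune-K\'arolyi theorem (Theorem~\ref{th:karolyi_finite}), which is the only nontrivial ingredient; everything else is a matter of matching definitions. Since $\mathbb A$ is a group it is unital, so $\mathbb A^{(1)}=\mathbb A$ and $\mathfrak p(\mathbb A)=\inf\{\ord_{\mathbb A}(z): z\in A,\ z\ne 0\}$. The standing assumption is precisely that $\ord_{\mathbb A}(z)=|\langle z\rangle_{\mathbb A}|\ge|X|+|Y|-1$ for every non-zero $z$, so it is equivalent to the single inequality $\mathfrak p(\mathbb A)\ge|X|+|Y|-1$ (in the degenerate case $\mathbb A=\{0\}$ there is no non-zero $z$, but then $X=Y=\{0\}$ and $|X+Y|=1=|X|+|Y|-1$, while the convention $\mathfrak p(\mathbb A)=|\mathbb N|$ keeps the equivalence intact). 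Feeding this into Theorem~\ref{th:karolyi_finite} gives
\[
|X+Y|\ \ge\ \min\bigl(\mathfrak p(\mathbb A),\,|X|+|Y|-1\bigr)\ =\ |X|+|Y|-1,
\]
with no restriction on the cardinalities of $X$ and $Y$, which is the assertion. Along this route there is no genuine obstacle: all the force of the statement is borrowed from Theorem~\ref{th:karolyi_finite}.

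Because that theorem is itself deep, I would also want a self-contained proof in the spirit of the present paper, trading the appeal to Hamidoune-K\'arolyi for a Davenport-type transform; for this I would grant myself the additional hypothesis that $\langle Y\rangle_{\mathbb A}$ is commutative (harmless in the archetypal torsion-free abelian case, and the regime in which the transform behaves well). After translating so that $0\in Y$, I would induct on $|Y|$, the case $|Y|=1$ being immediate from $|X+Y|=|X|$. In the inductive step I would invoke the Dyson--Davenport $e$-transform $X\mapsto X\cup(e+Y)$ and $Y\mapsto Y\cap(X-e)$, for a suitable $e\in X-Y$: a short inclusion-exclusion shows that it keeps the sumset inside $X+Y$ and preserves the quantity $|X|+|Y|$, hence preserves the order hypothesis verbatim, since the ambient group and the number $|X|+|Y|-1$ are untouched.

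The hard part is the familiar one. One must choose $e$ so that the new second summand $Y\cap(X-e)$ is non-empty yet strictly smaller than $Y$, feeding the induction; and one must show that when no such $e$ is available the pair is \emph{stable}, forcing $X+Y$ to admit a non-zero period $h$, i.e.\ $(X+Y)+h=X+Y$. Once such an $h$ is produced, $X+Y$ is a union of cosets of $\langle h\rangle_{\mathbb A}$, whence $|X+Y|\ge\ord_{\mathbb A}(h)\ge|X|+|Y|-1$ by the order hypothesis, and the induction closes. Isolating this stable configuration and exhibiting the period---which typically surfaces as a difference $y-y_0$ of elements of $Y$, exactly the quantity governing $\omega(Y)$ in the paper's main theorem---is the crux, and the step I expect to demand by far the most care.
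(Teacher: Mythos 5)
Your first argument is correct, and it is enough by itself: since $\mathbb A$ is a group, $\mathbb A^{(1)}=\mathbb A$ and $\mathfrak p(\mathbb A)$ is exactly the infimum of $\ord_{\mathbb A}(z)$ over non-zero $z$, so the order hypothesis is equivalent to $\mathfrak p(\mathbb A)\ge |X|+|Y|-1$, and Theorem~\ref{th:karolyi_finite} collapses the minimum to $|X|+|Y|-1$; your handling of the trivial group is fine, and the infinite case is harmless because $\ord_{\mathbb A}(z)\le|\mathbb N|$ for every $z$, so the hypothesis caps the size of $X$ and $Y$ in the only cases that could cause trouble. Note, however, that this is a genuinely different route from the paper, which in fact contains no proof of Theorem~\ref{th:CD_for_torsion-free_groups} at all: the statement is quoted from Kemperman \cite{Kemp56}, whose original argument is elementary and is carried out in the wider setting of cancellative semigroups, and the paper's own machinery (Theorem~\ref{th:strong_main} together with Lemma~\ref{lem:simple_lemma} and Corollary~\ref{cor:2}) is explicitly claimed to recover only the \emph{weak} version, Corollary~\ref{cor:weak_kemperman}, which additionally assumes that $\langle X\rangle$ or $\langle Y\rangle$ is commutative. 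So your reduction buys a two-line proof, but at the cost of invoking the Hamidoune--K\'arolyi theorem, a result far deeper than the one being proved (K\'arolyi's argument runs through the Feit--Thompson theorem, Hamidoune's through the isoperimetric method \cite{Karo05.2}), and it is confined to groups, surrendering the semigroup generality of Kemperman's original theorem; logically there is no circularity, though, since the paper states Theorem~\ref{th:karolyi_finite} as known and independent of everything proved here.

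Your second, sketched argument should be weighed separately: as you yourself flag, it is incomplete at precisely the crux (producing the period $h$ in the stable configuration), and since you add the hypothesis that $\langle Y\rangle$ be commutative, even a completed version would establish only the weak statement, i.e.\ essentially Corollary~\ref{cor:weak_kemperman}, which the paper already derives from Theorem~\ref{th:strong_main}. Your cardinality bookkeeping for the Dyson $e$-transform is right in a group, where $|X'|+|Y'|=|X|+|Y|$ holds exactly, but it is worth observing that the paper deliberately avoids the Dyson transform: in a cancellative semigroup $X-e$ is a difference \emph{set} whose cardinality is uncontrolled, which is exactly the obstruction that motivates the generalized Davenport transform of Section~\ref{sec:davenport_transform}.
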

In fact, \cite{Kemp56} is focused on cancellative
semigroups (there simply called semigroups), and it is precisely in this
framework that Kemperman establishes a series of results,
mostly related to the number of different representations of an element in a
sumset, eventually leading to Theorem
\ref{th:CD_for_torsion-free_groups}, a weak version of which will be proved in Section \ref{sec:main_results} as a corollary of our main theorem (namely, Corollary \ref{cor:weak_kemperman}).

For the rest, Hamidoune and coauthors, see \cite[Theorem
3]{Cill10}, have proved a Cauchy-Davenport theorem for \textit{acyclic}
monoids (these are termed acyclic \textit{semigroups} in \cite{Cill10}, but they are, in fact, \textit{monoids} in our terminology), and it would be interesting to find a common pattern among their result and the ones in the present paper; unluckily, the author has no clue on this for the moment (in particular, note that acyclic semigroups in \cite{Cill10} are not cancellative semigroups).
\subsection*{Organization.}
In Section
\ref{sec:statements}, we define the Cauchy-Davenport constant of a pair of sets in a semigroup and state our main results. In Section
\ref{sec:preliminaries}, we establish a few basic lemmas. Section
\ref{sec:davenport_transform} is devoted to generalized Davenport transforms
and their fundamental properties. We demonstrate the central theorem of the paper (namely, Theorem \ref{th:strong_main}) in Section
\ref{sec:main_results} and give a couple of applications in Section
\ref{sec:corollaries}.
\section{The statement of the main results}
\label{sec:statements}

With all the above in mind, we can now proceed to the heart of the paper.
\begin{definition}\label{def:davenport_constant}
For a subset $Z$ of
$A$, we let
\begin{equation}\label{equ:davenport_constant}
\omega_\mathbb{A}(Z) := \sup_{z_0 \in Z \cap A^\times}
\inf_{z \in Z \setminus \{z_0\}} \ord(z - z_0).
\end{equation}
Then, given $X,Y \subseteq A$ we define $\Omega_\mathbb{A}(X,Y) :=
0$ if either of $X$ or $Y$ is empty; $\Omega_\mathbb{A}(X,Y) := \max(|X|,|Y|)$ if $X \times Y \ne \emptyset$ and either
$X$ or $Y$ is infinite, and
\begin{displaymath}
\Omega_\mathbb{A}(X,Y) :=
\min(\omega_\mathbb{A}(X,Y), |X|+|Y|-1)
\end{displaymath}
otherwise, where $\omega_\mathbb{A}(X,Y) := \max(\omega_\mathbb{A}(X),\omega_\mathbb{A}(Y))$. We refer to
$\Omega_\mathbb{A}(X,Y)$ as the \textit{Cauchy-Davenport constant of
$(X,Y)$ relative to $\mathbb A$} (again, the subscript `$\mathbb A$' may be omitted from the notation if there is no danger of ambiguity).
\end{definition}
Here and later, we assume that the
supremum of the empty set is $0$, while its infimum is $|\mathbb N|$, so any pair of subsets of $A$ has a well-defined
Cauchy-Davenport constant (relative to $\mathbb A$). In particular, $\omega(Z)$ is zero for $Z \subseteq A$ if $Z \cap A^\times =
\emptyset$. However, this is not the case, for instance, when $Z \ne \emptyset$ and $\mathbb A$ is a group, which is the ``base'' for the following non-trivial bound:
\begin{theorem}\label{th:strong_main}
Suppose $\mathbb A$ is cancellative and let $X,Y$ be subsets of $A$ such that $\langle Y \rangle$ is commutative. Then, $|X+Y| \ge
\min(\omega(Y),|X|+|Y|-1)$ if both of $X$ and $Y$ are finite and non-empty, and $|X+Y| \ge \Omega(X,Y)$ otherwise.
\end{theorem}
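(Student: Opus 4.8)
The plan is to treat the degenerate cases directly and reduce everything else to a single normalized, Chowla-type inequality that I attack by induction. If $X$ or $Y$ is empty there is nothing to prove, and if one of them is infinite while both are non-empty, then fixing $y\in Y$ the right translation $x\mapsto x+y$ is injective by cancellativity, so $|X+Y|\ge|X+y|=|X|$, and symmetrically $|X+Y|\ge|x+Y|=|Y|$ for any fixed $x\in X$; hence $|X+Y|\ge\max(|X|,|Y|)=\Omega(X,Y)$. I may thus assume $X,Y$ finite and non-empty, where I only have to prove $|X+Y|\ge\min(\omega(Y),|X|+|Y|-1)$. If $Y\cap A^\times=\emptyset$ then $\omega(Y)=0$ and the bound is vacuous, so fix a unit $y_0\in Y$. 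Because $\sup_{y_0}\min(t(y_0),c)=\min(\sup_{y_0}t(y_0),c)$ for $c:=|X|+|Y|-1$, and $|X+Y|$ does not depend on $y_0$, it suffices to prove $|X+Y|\ge\min(\inf_{y\in Y\setminus\{y_0\}}\ord(y-y_0),\,|X|+|Y|-1)$ for this one $y_0$.

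Next I would move $y_0$ to the identity. Since $y_0$ lies in the commutative semigroup $\langle Y\rangle$ it commutes with all of $\langle Y\rangle$, and adding $\tilde y_0$ on both sides of $y_0+y=y+y_0$ shows that its inverse $\tilde y_0$ also commutes with every $y\in Y$, hence with $\langle Y\rangle$. Putting $X^\flat:=X+y_0$ and $Y^\flat:=\tilde y_0+Y$, one then checks, using $y_0+\tilde y_0=0$ and $X+0=X$, that $X^\flat+Y^\flat=X+Y$; that $|X^\flat|=|X|$ and $|Y^\flat|=|Y|$, both maps being bijections; that $0=\tilde y_0+y_0\in Y^\flat$ and $\langle Y^\flat\rangle$ is commutative (it sits inside the commutative semigroup generated by $\langle Y\rangle$ together with the central element $\tilde y_0$); and that $\ord(\tilde y_0+y)=\ord(y-y_0)$ since $y-y_0=\tilde y_0+y$. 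Replacing $(X,Y)$ by $(X^\flat,Y^\flat)$, the target becomes: for finite non-empty $X,Y$ with $0\in Y$ and $\langle Y\rangle$ commutative, $|X+Y|\ge\min(m,|X|+|Y|-1)$, where $m:=\inf_{y\in Y\setminus\{0\}}\ord(y)$.

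I would prove this by induction on $|Y|$. The case $|Y|=1$ is immediate, as then $|X+Y|=|X|=|X|+|Y|-1$; so let $|Y|\ge2$ and, for contradiction, assume $|X+Y|<m$ and $|X+Y|\le|X|+|Y|-2$. Note $0\in Y$ forces $X\subseteq X+Y$ and $x+Y\subseteq X+Y$, so $\max(|X|,|Y|)\le|X+Y|<m\le\ord(d)$ for every $d\in Y\setminus\{0\}$. The mechanism is a generalized Davenport transform: for a suitable $d\in Y\setminus\{0\}$ set
\[
X^*:=X\cup(X+d),\qquad Y^*:=\{y\in Y:y+d\in Y\}.
\]
Then $X^*+Y^*=(X+Y^*)\cup((X+d)+Y^*)\subseteq X+Y$, since $Y^*\subseteq Y$ handles the first piece while for $y\in Y^*$ associativity gives $(X+d)+y=X+(d+y)$ with $d+y=y+d\in Y$, because $d,y\in\langle Y\rangle$ commute; the point is that $X$ never leaves the left-hand factor, which is what lets the transform operate in the non-commutative setting. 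Moreover $0\in Y^*$, and $Y^*\subsetneq Y$: were $Y+d=Y$ we would get $\langle d\rangle\subseteq Y$ and $\ord(d)\le|Y|\le|X+Y|<\ord(d)$, absurd. Finally $\langle Y^*\rangle\subseteq\langle Y\rangle$ is commutative and every non-zero element of $Y^*$ has order $\ge m$, so the inductive hypothesis applies to $(X^*,Y^*)$.

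The step I expect to be the crux is the size estimate $|X^*|+|Y^*|\ge|X|+|Y|$. Granting it, the induction closes at once: $|X+Y|\ge|X^*+Y^*|\ge\min(m,|X^*|+|Y^*|-1)\ge\min(m,|X|+|Y|-1)$, contradicting the standing assumption. Since $|X^*|+|Y^*|-(|X|+|Y|)=|(X+d)\setminus X|-|(Y+d)\setminus Y|$, what is required is a $d$ that displaces $X$ at least as much as it displaces $Y$; this fails for a generic $d$, so the real work lies in choosing $d$ correctly. Here I would exploit that $\langle d\rangle$ is a finite cyclic group of order $\ord(d)>\max(|X|,|Y|)$ acting freely on $A$ by right translation, and compare the block structures cut out by $X$ and by $Y$ along its orbits---taking, say, $d$ of least order in $Y\setminus\{0\}$---or else argue with a counterexample having $|Y|$, and then $|X|$, minimal and show that minimality produces a usable $d$. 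Converting the order hypothesis $m$ into this controlled size estimate, in place of the exact mass conservation available in the commutative Dyson transform, is the heart of the matter; the non-commutativity itself is cheap, being absorbed by the left-anchoring of $X$ noted above.
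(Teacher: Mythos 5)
Your reductions are sound and, up to the translation step, run parallel to the paper's: the empty and infinite cases via cancellativity, the exchange of $\sup$ and $\min$ so as to work with a single unit $y_0$, the shift $(X,Y)\mapsto(X+y_0,\tilde y_0+Y)$ preserving sumset, cardinalities, commutativity of the generated subsemigroup and the relevant orders (the paper shifts on the right, $W_0:=Y-\tilde y_0$, which is immaterial), and the argument that $Y^*\ne Y$, since $Y+d=Y$ together with $0\in Y$ would give $\langle d\rangle\subseteq Y$ and $\ord(d)\le|Y|\le|X+Y|<m$; this last step is the analogue of the paper's proof, via Lemma \ref{lem:incremental}, that $(X+2W_0)\setminus(X+W_0)\ne\emptyset$. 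Your left-anchoring of $X$, i.e. $(X+d)+y=X+(d+y)$ with $d+y=y+d\in Y$, is also exactly the right way to exploit that only $\langle Y\rangle$ is assumed commutative.

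But there is a genuine gap precisely where you flag it, and it is not a routine one. Your transform $X^*:=X\cup(X+d)$, $Y^*:=\{y\in Y:y+d\in Y\}$ is a Kneser-type transform, not a Davenport transform, and for it mass is not conserved: you need some $d\in Y\setminus\{0\}$ with $|(X+d)\setminus X|\ge|(Y+d)\setminus Y|$, and nothing in your hypotheses produces such a $d$. Neither of your sketched fallbacks closes this. The free-action picture requires $\langle d\rangle$ to be a finite cyclic group, which fails whenever $\ord(d)$ is infinite (e.g.\ in $\mathbb A=(\mathbb N,+)$, where $\langle d\rangle\cong(\mathbb N^+,+)$); and even in $\mathbb Z/p\mathbb Z$, where the statement is the Cauchy--Davenport theorem itself, extracting a $d$ that displaces $X$ at least as much as $Y$ by comparing orbit block structures is essentially as hard as the theorem --- it is the kind of statement that Kneser's theorem, not a pigeonhole argument, is designed to deliver. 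The paper's generalized Davenport transform avoids the need for any such $d$: it picks $z\in(X+2W_0)\setminus(X+W_0)$, sets $\tilde Y_z:=\{y\in W_0:z\in X+W_0+y\}$ and $W_0^\prime:=W_0\setminus\tilde Y_z$, and compensates the loss $|\tilde Y_z|$ in the $Y$-side not by growth of $X$ but by the difference set $z-\tilde Y_z$: its elements lie in $X+W_0$ (right cancellativity), number at least $|\tilde Y_z|$ (left cancellativity), and avoid $X+W_0^\prime$ (commutativity of $\langle W_0\rangle$ plus the definition of $\tilde Y_z$), whence $|X+W_0|+|W_0^\prime|\ge|X+W_0^\prime|+|W_0|$ with $0\in W_0^\prime$ --- see Proposition \ref{prop:properties_of_the_modified_Davenport_transform} --- involving no choice of $d$ and no estimate on $X$ whatsoever. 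If you substitute this $(X,W_0^\prime)$ for your $(X^*,Y^*)$, your induction on $|Y|$ closes verbatim; as written, the proposal proves the theorem only modulo an unproven lemma that carries the entire difficulty.
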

Theorem \ref{th:strong_main} represents the central contribution of the
paper.
Not only it extends the Cauchy-Davenport theorem to the broader and more
abstract setting of semigroups (see Section \ref{sec:corollaries}), but it also
provides a strengthening and a generalization of Theorem \ref{th:karolyi_abelian}, due to the following lemma.
\begin{lemma}\label{lem:simple_lemma}
If $Z$ is a subset of $A$ such that $Z \cap A^\times \ne
\emptyset$, then $\omega(Z) \ge \mathfrak{p}(\mathbb A)$.
\end{lemma}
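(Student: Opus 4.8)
The plan is to reduce everything to the defining property of $\mathfrak p(\mathbb A)$, namely that it is the least order of a non-identity element. First I would observe that the hypothesis $Z \cap A^\times \neq \emptyset$ forces $\mathbb A$ to possess a unit, hence to be a monoid; consequently $\mathbb A^{(1)} = \mathbb A$, its identity $0$ lies in $A$, and $\mathfrak p(\mathbb A)$ is precisely $\min\{\ord(w) : w \in A,\ w \neq 0\}$ (with the usual convention when $A = \{0\}$). Since the supremum defining $\omega(Z)$ now runs over the non-empty set $Z \cap A^\times$, it will be enough to fix one unit $z_0 \in Z \cap A^\times$ and bound the inner infimum $\inf_{z \in Z \setminus \{z_0\}} \ord(z - z_0)$ from below by $\mathfrak p(\mathbb A)$.

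The next step is to make sense of the symbol $\ord(z - z_0)$. Because $z_0$ is a unit, it has a (two-sided) inverse $\tilde z_0$, and I would check that the difference set $z - z_0 = \{w \in A : w + z_0 = z\}$ collapses to the singleton $\{z + \tilde z_0\}$: indeed $(z + \tilde z_0) + z_0 = z$, and any solution $w$ of $w + z_0 = z$ satisfies $w = w + z_0 + \tilde z_0 = z + \tilde z_0$. Note that this uses only the existence of the inverse $\tilde z_0$, and not cancellativity of $\mathbb A$. Identifying the singleton with its element, $\ord(z - z_0)$ is thus the order of $z + \tilde z_0$ in $\mathbb A$.

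It then remains to verify that $z + \tilde z_0 \neq 0$ whenever $z \in Z \setminus \{z_0\}$: if $z + \tilde z_0 = 0$, then adding $z_0$ on the right yields $z = z_0$, a contradiction. Hence every $z \in Z \setminus \{z_0\}$ contributes an element $z + \tilde z_0$ of $A \setminus \{0\}$, so $\ord(z - z_0) \ge \mathfrak p(\mathbb A)$ by the description of $\mathfrak p(\mathbb A)$ obtained in the first step. Taking the infimum over $z$ and then the supremum over $z_0$ gives $\omega(Z) \ge \mathfrak p(\mathbb A)$, as desired.

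There is essentially no deep obstacle here; the work is entirely a matter of unwinding definitions, and the only points that require care are bookkeeping ones. The first is the collapse of the difference set $z - z_0$ to a single element, which hinges on $z_0$ being a unit. The second is the degenerate case $Z = \{z_0\}$, where $Z \setminus \{z_0\} = \emptyset$ and the inner infimum equals $|\mathbb N|$ by convention; since $\mathfrak p(\mathbb A) \le |\mathbb N|$ in every case, the claimed inequality still holds (and is in fact an equality when $\mathbb A$ is trivial).
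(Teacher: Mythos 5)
Your proof is correct and takes essentially the same route as the paper's: pick $z_0 \in Z \cap A^\times$, dispose of the singleton case via the convention that the infimum over the empty set is $|\mathbb N| \ge \mathfrak{p}(\mathbb A)$, and for $z \ne z_0$ bound $\ord(z - z_0) \ge \mathfrak{p}(\mathbb A)$ straight from the definition of $\mathfrak{p}(\mathbb A)$. The bookkeeping you carry out explicitly --- that $Z \cap A^\times \ne \emptyset$ forces $\mathbb A$ to be a monoid with $\mathbb A^{(1)} = \mathbb A$, that the difference set $z - z_0$ collapses to the singleton $\{z + \tilde z_0\}$ without any appeal to cancellativity, and that $z + \tilde z_0 \ne 0$ --- is precisely what the paper delegates to point (i) of Lemma \ref{lem:inverses} and to Remark \ref{rem:subtle}, so your write-up is a faithful, slightly more detailed rendering of the same argument.
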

\begin{proof}
Pick $z_0 \in Z \cap A^\times$ using that $Z \cap A^\times \ne \emptyset$. If $Z$ is a singleton, the assertion is trivial since then $\inf_{z \in Z \setminus \{z_0\}} \ord(z-z_0) = |\mathbb N|$. In the other case, taking $z \in Z \setminus \{z_0\}$ gives $\ord(z - z_0) \ge
\mathfrak{p}(\mathbb A)$ by the definition of $\mathfrak{p}(\mathbb A)$.
\end{proof}
Lemma \ref{lem:simple_lemma}
applies, on the level of groups, to \textit{any} non-empty subset (see Corollary \ref{prop:groupal_case} below), and
the stated inequality is strict in significant cases: For a concrete example, pick $k,q \in \mathbb N^+$ and set $m := qk$ and $
X := \{(1 + ik) \bmod m: i = 1, \ldots, q\}$. Then observe that $|2X| = \Omega_{\mathbb Z/m\mathbb Z}(X,X) = q$, while
$\mathfrak{p}(\mathbb Z/m \mathbb Z)$ is the
smallest prime, say $p$, dividing $m$, to the effect that $\mathfrak{p}(\mathbb Z/m\mathbb Z)$ is
``much'' smaller than $\Omega_{\mathbb Z/m\mathbb Z}(X,X)$ if $p$ is ``much'' smaller than
$q$.

Theorem \ref{th:strong_main} can be ``symmetrized'' and further strengthened in the
case where each summand generates a commutative subsemigroup, leading to the following corollaries, whose proofs are straightforward in the light of Definition \ref{def:davenport_constant}:
\begin{corollary}\label{cor:1}
Assume $\mathbb A$ is cancellative and let $X,Y$ be subsets of $A$ such that $\langle X \rangle$ is commutative. Then, $|X+Y| \ge
\min(\omega(X),|X|+|Y|-1)$ if both of $X$ and $Y$ are finite and non-empty, and $|X+Y| \ge \Omega(X,Y)$ otherwise.
\end{corollary}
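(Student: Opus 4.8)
The plan is to deduce Corollary \ref{cor:1} from Theorem \ref{th:strong_main} by passing to the \emph{opposite} semigroup. Write $\mathbb A^{\mathrm{op}} := (A, \oplus)$ for the structure on the same carrier whose operation is $a \oplus b := b + a$; this is associative, and it is cancellative precisely when $\mathbb A$ is, so it is a legitimate ambient for the theorem. The point is that the sumset of the ordered pair $(Y,X)$ computed in $\mathbb A^{\mathrm{op}}$ satisfies $Y \oplus X = \{y \oplus x : x \in X,\, y \in Y\} = \{x + y : x \in X,\, y \in Y\} = X + Y$, whence $|Y \oplus X| = |X + Y|$. Moreover, the subsemigroup generated by $X$ in $\mathbb A^{\mathrm{op}}$ coincides, as a set, with $\langle X \rangle$ (reversing the order of a sum of elements of $X$ again produces a sum of elements of $X$), and commutativity is plainly inherited under reversal of the operation, since $a \oplus b = b + a = a + b = b \oplus a$ whenever $a+b=b+a$. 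Thus $\langle X \rangle_{\mathbb A^{\mathrm{op}}}$ is commutative, and the hypotheses of Theorem \ref{th:strong_main} are met by $(Y,X)$ in $\mathbb A^{\mathrm{op}}$, with $X$ now in the role of the summand generating a commutative subsemigroup.

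It then remains to check that the quantities entering the two conclusions are unaffected by this passage. First, $\mathbb A$ and $\mathbb A^{\mathrm{op}}$ share the same identity and the same set of units, and for a single element $w$ the cyclic subsemigroup $\langle w \rangle$ is the same set in both, so $\ord$ and $A^\times$ are intrinsic to the unordered pair of operations. Granting the invariance of $\omega$ established below, $\Omega$ is likewise reversal-invariant; since $\Omega$ is patently symmetric in its two arguments (it is built from $\max(|X|,|Y|)$, $|X|+|Y|-1$, and $\max(\omega(X),\omega(Y))$), this gives $\Omega_{\mathbb A^{\mathrm{op}}}(Y,X) = \Omega(X,Y)$, which already disposes of the case where $X,Y$ are not both finite and non-empty. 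In the finite non-empty case, Theorem \ref{th:strong_main} applied in $\mathbb A^{\mathrm{op}}$ reads $|X+Y| = |Y \oplus X| \ge \min(\omega_{\mathbb A^{\mathrm{op}}}(X),\, |X|+|Y|-1)$, so everything hinges on the identity $\omega_{\mathbb A^{\mathrm{op}}}(X) = \omega(X)$.

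This last identity is the only non-formal step, and I would prove it by a conjugacy observation. Fix $z_0 \in X \cap A^\times$ with inverse $\tilde z_0$, and let $z \in X \setminus \{z_0\}$. Unwinding the definition of the difference set, $z - z_0$ is the singleton $\{z + \tilde z_0\}$ in $\mathbb A$, whereas the corresponding difference in $\mathbb A^{\mathrm{op}}$ is $\{\tilde z_0 + z\}$. The map $\phi \colon x \mapsto \tilde z_0 + x + z_0$ is an automorphism of $\mathbb A$ (its inverse is $x \mapsto z_0 + x + \tilde z_0$, and $\phi(x+y)=\phi(x)+\phi(y)$ follows on inserting $z_0 + \tilde z_0 = 0$), and it carries $z + \tilde z_0$ to $\tilde z_0 + z$ by use of $\tilde z_0 + z_0 = 0$; since automorphisms preserve order, $\ord(z + \tilde z_0) = \ord(\tilde z_0 + z)$. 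Taking the infimum over $z \in X \setminus \{z_0\}$ and the supremum over $z_0 \in X \cap A^\times$ yields $\omega(X) = \omega_{\mathbb A^{\mathrm{op}}}(X)$; in fact the argument uses no commutativity and shows $\omega = \omega_{\mathbb A^{\mathrm{op}}}$ on all subsets of $A$, which is what justifies the reversal-invariance of $\Omega$ invoked above. The main obstacle, as indicated, is precisely pinning down this invariance of $\omega$ under reversal; once it is in place, combining it with the three preceding observations reduces the statement verbatim to Theorem \ref{th:strong_main}, and the rest is bookkeeping.
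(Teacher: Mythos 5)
Your proof is correct and is essentially the argument the paper intends: the paper leaves Corollary \ref{cor:1} as a ``straightforward'' left--right symmetrization of Theorem \ref{th:strong_main}, which is precisely your passage to the opposite semigroup. Your conjugation step $\ord(z + \tilde z_0) = \ord(\tilde z_0 + z)$, bridging the fact that $z - z_0$ unwinds to $\{z + \tilde z_0\}$ in $\mathbb A$ but to $\{\tilde z_0 + z\}$ in the opposite structure, is exactly the one detail the paper leaves implicit, and you handle it correctly via the inner automorphism $x \mapsto \tilde z_0 + x + z_0$.
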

\begin{corollary}\label{cor:2}
If $\mathbb A$ is cancellative and $X,Y$ are subsets of
$A$ such that both of $\langle X
\rangle$ and $\langle Y \rangle$ are commutative, then $|X+Y| \ge \Omega(X,Y)$.
\end{corollary}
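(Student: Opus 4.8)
The plan is to obtain Corollary \ref{cor:2} by juxtaposing the one-sided bound of Theorem \ref{th:strong_main} with its mirror image Corollary \ref{cor:1}, and then reconciling the two resulting lower bounds through an elementary lattice identity. The crucial observation enabling this is that, since \emph{both} $\langle X \rangle$ and $\langle Y \rangle$ are commutative, the hypotheses of Theorem \ref{th:strong_main} and of Corollary \ref{cor:1} are simultaneously satisfied with $X,Y$ kept in their present roles.

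First I would dispose of the cases in which $X,Y$ are not both finite and non-empty. If one of $X,Y$ is empty, then $X+Y = \emptyset$ and $\Omega(X,Y) = 0$, so the inequality is immediate; and if both are non-empty with at least one infinite, then the conclusion $|X+Y| \ge \Omega(X,Y)$ is exactly the ``otherwise'' clause of Theorem \ref{th:strong_main} (or of Corollary \ref{cor:1}), so there is nothing further to check. (One could alternatively note that cancellativity alone forces $|X+Y| \ge \max(|X|,|Y|) = \Omega(X,Y)$ here, since for fixed $y$ the map $x \mapsto x+y$ is injective, and symmetrically.)

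The substantive case is that in which $X,Y$ are both finite and non-empty, where by Definition \ref{def:davenport_constant} one has $\Omega(X,Y) = \min\bigl(\max(\omega(X),\omega(Y)),\,|X|+|Y|-1\bigr)$. Here I would apply Theorem \ref{th:strong_main} (using that $\langle Y \rangle$ is commutative) to get $|X+Y| \ge \min(\omega(Y),|X|+|Y|-1)$, and Corollary \ref{cor:1} (using that $\langle X \rangle$ is commutative) to get $|X+Y| \ge \min(\omega(X),|X|+|Y|-1)$. Taking the larger of the two bounds and invoking the identity $\max(\min(a,c),\min(b,c)) = \min(\max(a,b),c)$ (valid in any chain, hence for finite and infinite cardinals alike) with $a=\omega(X)$, $b=\omega(Y)$, and $c = |X|+|Y|-1$, I recover precisely $|X+Y| \ge \min\bigl(\max(\omega(X),\omega(Y)),\,|X|+|Y|-1\bigr) = \Omega(X,Y)$, since $\max(\omega(X),\omega(Y)) = \omega(X,Y)$ by definition.

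I do not expect a genuine obstacle, consistent with the paper's remark that the corollary is ``straightforward in the light of Definition \ref{def:davenport_constant}''. The only two points demanding care are (i) verifying that both the theorem and its mirror corollary apply at once, which is exactly what the double commutativity hypothesis secures, and (ii) applying the min--max distributive identity in the correct direction so that the symmetrized quantity $\omega(X,Y)$ emerges. Neither involves any real computation beyond bookkeeping of the cases in Definition \ref{def:davenport_constant}.
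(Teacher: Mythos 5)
Your proposal is correct and matches the paper's intended argument: the paper gives no explicit proof, stating only that Corollary \ref{cor:2} is ``straightforward in the light of Definition \ref{def:davenport_constant}'', and the intended route is exactly yours --- combine Theorem \ref{th:strong_main} with its mirror Corollary \ref{cor:1}, handle the empty/infinite cases via the ``otherwise'' clauses, and merge the two finite-case bounds via $\max(\min(a,c),\min(b,c)) = \min(\max(a,b),c)$ to recover $\min(\omega(X,Y),|X|+|Y|-1)$. Nothing further is needed.
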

Moreover, the result specializes to groups as follows:
\begin{corollary}\label{prop:groupal_case}
If $\mathbb A$ is a group and $X,Y$ are non-empty subsets of
$A$ such that $\langle Y \rangle$ is commutative. Then,
$|X+Y| \ge \min(\omega(Y), |X|+|Y|-1)$, where
\begin{displaymath}
\omega(Y) = \sup_{y_0 \in Y} \inf_{y \in Y \setminus \{y_0\}} \ord(y - y_0),
\end{displaymath}
and indeed $\omega(Y) = \max_{y_0 \in Y} \inf_{y \in Y \setminus \{y_0\}} \ord(y - y_0)$ if $Y$ is finite.
\end{corollary}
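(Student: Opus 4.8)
The plan is to derive the statement directly from Theorem \ref{th:strong_main}, exploiting the fact that a group is in particular a cancellative monoid in which \emph{every} element is a unit. First I would record that, since $\mathbb A$ is a group, $A^\times = A$, so the constraint $y_0 \in Y \cap A^\times$ appearing in \eqref{equ:davenport_constant} collapses to $y_0 \in Y$; this is exactly what converts the defining formula for $\omega$ into the displayed formula for $\omega(Y)$. Because groups are cancellative and $\langle Y \rangle$ is assumed commutative, the hypotheses of Theorem \ref{th:strong_main} are met, so the machinery of that theorem is available.

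Next I would split into two cases according to whether both $X$ and $Y$ are finite. If they are (and non-empty, as assumed), the inequality $|X+Y| \ge \min(\omega(Y), |X|+|Y|-1)$ is literally the first alternative in the conclusion of Theorem \ref{th:strong_main}, and there is nothing further to do. Suppose instead that at least one of $X,Y$ is infinite; since both are non-empty we have $X \times Y \ne \emptyset$, so by Definition \ref{def:davenport_constant} we get $\Omega(X,Y) = \max(|X|,|Y|)$, whence Theorem \ref{th:strong_main} yields $|X+Y| \ge \max(|X|,|Y|)$. It then suffices to invoke the cardinal-arithmetic identity $|X|+|Y|-1 = \max(|X|,|Y|)$ (valid whenever one summand is infinite, together with the conventions fixed after Definition \ref{def:davenport_constant}), which gives $\min(\omega(Y),|X|+|Y|-1) \le |X|+|Y|-1 = \max(|X|,|Y|) \le |X+Y|$, as required. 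Note that this last chain holds irrespective of the actual value of $\omega(Y)$, so no further analysis of $\omega(Y)$ is needed in the infinite case.

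For the closing ``indeed'' clause I would observe that when $Y$ is non-empty and finite, the outer supremum in the formula for $\omega(Y)$ ranges over the finite, non-empty index set $Y$, so it is a supremum of finitely many (possibly infinite) cardinals and is therefore attained, i.e.\ may be written as a maximum. The convention $\inf \emptyset = |\mathbb N|$ handles the singleton case $Y = \{y_0\}$ uniformly, since then the inner infimum is taken over the empty set.

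I do not anticipate a genuine obstacle here: the corollary is essentially a translation of Theorem \ref{th:strong_main} into the language of groups, made possible by the equality $A^\times = A$. The only point demanding a little care is the bookkeeping in the infinite case, where one must check that the $\Omega$-bound supplied by Theorem \ref{th:strong_main} dominates $\min(\omega(Y),|X|+|Y|-1)$; as indicated, this collapses to the identity $|X|+|Y|-1 = \max(|X|,|Y|)$ for infinite cardinals and the stated conventions, so it poses no real difficulty.
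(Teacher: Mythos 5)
Your proof is correct and takes essentially the same route as the paper's, which deduces the corollary directly from Theorem \ref{th:strong_main} via the two observations that $\mathbb A$ being a group gives $Y = Y \cap A^\times$ (turning \eqref{equ:davenport_constant} into the displayed formula) and that a supremum over a finite set is a maximum. The only difference is that you make explicit the infinite-case bookkeeping $\min(\omega(Y), |X|+|Y|-1) \le |X|+|Y|-1 = \max(|X|,|Y|) = \Omega(X,Y) \le |X+Y|$, which the paper compresses into the word ``immediate.''
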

\begin{proof}
Immediate by Theorem \ref{th:strong_main}, for on the one hand $\mathbb
A$ being a group implies $Y = Y \cap A^\times$, and on the other, a supremum over a
finite set is a maximum.
\end{proof}
The next corollary is now a \textit{partial} generalization of Theorem \ref{th:CD_for_torsion-free_groups} to cancellative semigroups: its proof is straightforward by Corollary \ref{cor:2} and Lemma \ref{lem:simple_lemma}. Here, we say that $\mathbb A$ is torsion-free if $\mathfrak{p}(\mathbb A)$ is infinite (in fact, this is an abstraction of the analogous definition for groups).
\begin{corollary}
\label{cor:weak_kemperman}
If $\mathbb A$ is cancellative and $X,Y$ are non-empty subsets of $A$ such that every element of $A \setminus \{0\}$ has order $\ge |X| + |Y| - 1$ in $\mathbb A$ (this is especially the case when $\mathbb A$ is torsion-free) and either of $\langle X \rangle$ or $\langle Y \rangle$ is abelian, then $|X+Y| \ge |X| + |Y| - 1$.
\end{corollary}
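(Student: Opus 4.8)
The plan is to feed the order hypothesis into Theorem \ref{th:strong_main} (or Corollary \ref{cor:1}) by way of Lemma \ref{lem:simple_lemma}. First I would clear the degenerate cases. If either $X$ or $Y$ is infinite, Theorem \ref{th:strong_main} already gives $|X+Y| \ge \Omega(X,Y) = \max(|X|,|Y|)$, which equals $|X|+|Y|-1$ in cardinal arithmetic once one summand is infinite. If instead $\min(|X|,|Y|) = 1$, say $Y = \{y\}$, then cancellativity makes $z \mapsto z+y$ injective, so $|X+Y| = |X| = |X|+|Y|-1$ directly (this case must be handled by hand, since a non-unit $y$ forces $\omega(Y)=0$ and makes the theorem vacuous). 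Thus I may assume $X,Y$ finite with $|X|,|Y| \ge 2$. I would also record that the standing hypothesis is exactly $\mathfrak{p}(\mathbb A) \ge |X|+|Y|-1$, because the order of a non-zero element of $A$ is computed identically in $\mathbb A$ and in $\mathbb A^{(1)}$, and $\mathfrak{p}(\mathbb A)$ is by definition the least such order.

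For the core step, let $W$ be whichever of $X,Y$ generates a commutative subsemigroup. If $W=Y$, Theorem \ref{th:strong_main} yields $|X+Y| \ge \min(\omega(Y),|X|+|Y|-1)$; if $W=X$, Corollary \ref{cor:1} gives the companion bound $|X+Y| \ge \min(\omega(X),|X|+|Y|-1)$. Either way $|X+Y| \ge \min(\omega(W),|X|+|Y|-1)$, so it suffices to show $\omega(W) \ge |X|+|Y|-1$. Here I would apply Lemma \ref{lem:simple_lemma}, which gives $\omega(W) \ge \mathfrak{p}(\mathbb A)$; together with $\mathfrak{p}(\mathbb A) \ge |X|+|Y|-1$ this makes the displayed minimum equal to $|X|+|Y|-1$ and closes the argument.

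The catch, and what I expect to be the main obstacle, is that Lemma \ref{lem:simple_lemma} is available only once $W \cap A^\times \ne \emptyset$: the quantity $\omega(W)$ is a supremum over the units lying in $W$, and it collapses to $0$ when $W$ contains none. When $\mathbb A$ is a group this costs nothing, since then $A^\times = A \supseteq W \ne \emptyset$; this is precisely how Corollary \ref{prop:groupal_case} recovers the group statement and, through it, Theorem \ref{th:CD_for_torsion-free_groups}. In the genuinely semigroup-theoretic regime, however, $W$ need not meet $A^\times$, and the natural remedy, namely passing to $\mathbb A^{(1)}$ and translating $W$ by the inverse of one of its elements inside the group of fractions of the commutative semigroup $\langle W \rangle$ so as to move the identity into a translate of $W$, exports the problem to that group of fractions; there the effective lower bound becomes the minimal order of a non-trivial torsion element, which can be \emph{strictly smaller} than $\mathfrak{p}(\mathbb A)$, because the orders of the elements of $A$ do not control the orders of the formal differences $y - y_0$. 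This gap is exactly why the result is only a partial generalization of Theorem \ref{th:CD_for_torsion-free_groups}, and why the order hypothesis is truly harmless only when the abelian summand already carries a unit; so the crux of the write-up will be to make the role of $W \cap A^\times$ explicit rather than to grind through the (routine) reductions around it.
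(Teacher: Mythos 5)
Your core derivation is precisely the paper's intended proof: the paper offers no separate argument for this corollary, saying only that it is ``straightforward by Corollary \ref{cor:2} and Lemma \ref{lem:simple_lemma}'', and your version is in fact the more accurate reading (since only one summand is assumed to generate an abelian subsemigroup, the right citations are Theorem \ref{th:strong_main} and Corollary \ref{cor:1}, as you use), with the singleton and infinite degenerate cases handled correctly. The substantive content of your proposal is the third paragraph, and you are right not to wave the obstacle away: it is a genuine gap, and it sits in the paper, not in your argument. Nothing in the hypotheses forces $W \cap A^\times \ne \emptyset$; when that intersection is empty, $\omega(W) = 0$ by the paper's own convention on empty suprema, Theorem \ref{th:strong_main} becomes vacuous, and Lemma \ref{lem:simple_lemma} is unavailable. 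Worse, the statement as printed is \emph{false} in that regime. Take $\mathbb A = \{(a,n) \in (\mathbb Z/2\mathbb Z) \times \mathbb Z : n \ge 1\}$ under componentwise addition: a commutative cancellative semigroup with no idempotent (hence not a monoid, so $A^\times = \emptyset$) in which every element has infinite order, so $\mathbb A$ is torsion-free in the paper's sense and the order hypothesis holds for any finite $X,Y$. Yet $X = Y = \{(0,1),(1,1)\}$ gives $X+Y = \{(0,2),(1,2)\}$, so $|X+Y| = 2 < 3 = |X|+|Y|-1$. This instantiates exactly the mechanism you describe: the difference of the two elements of $Y$ has order $2$ in the group of fractions even though every element of $A$ has infinite order, i.e.\ $\mathfrak{p}(\mathbb A)$ does not control the orders of differences.

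Two refinements to your diagnosis. First, the problematic regime is narrower than ``no unit in $W$'' might suggest: in a cancellative semigroup, an element $z$ of finite order generates a finite cancellative subsemigroup, which is a group; its idempotent identity cancels to a two-sided identity of all of $\mathbb A$, and $z$ is then a unit of $\mathbb A$. Hence $W \cap A^\times = \emptyset$ forces every element of $W$ to have infinite order, and your proof is complete the moment some element of the abelian summand has finite order, as well as for any group, recovering Corollary \ref{prop:groupal_case} and the weak form of Theorem \ref{th:CD_for_torsion-free_groups}. Second, the paper's word ``partial'' refers to the added commutativity hypothesis relative to Kemperman's theorem (see the open question closing Section \ref{sec:statements}), not to the unit issue, so your attribution there is slightly off; but your mathematical instinct is sound, and the honest repair is the one you propose: add the hypothesis $W \cap A^\times \ne \emptyset$ explicitly, after which your write-up constitutes a complete and correct proof.
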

Theorem \ref{th:strong_main} is proved in Section \ref{sec:main_results}. The
argument is inspired by the transformation proof originally used for Theorem \ref{th:basic_CD}
by Davenport in \cite{Daven35}. This leads us to the definition of what we call a \textit{generalized Davenport transform}.
The author is not aware of an earlier use of the same technique in the literature, all the more in relation to semigroups. With few
exceptions, remarkably including \cite{HRodseth} and A.G. Vosper's original proof of his famous
theorem on critical pairs \cite{Vosper56}, even the ``classical'' Davenport transform has not been greatly considered by practitioners in the area, especially in comparison with similar ``technology'' such as the Dyson transform
\cite[p. 42]{Natha}.
\begin{remark}\label{rem:malcev}
A couple of things are worth mentioning before proceeding. While every \textit{commutative} cancellative semigroup embeds as
a subsemigroup into a group (as it follows from the standard construction of the group of fractions of a commutative monoid; see \cite[Chapter I, Section 2.4]{BourAlgI}), nothing similar is true in the non-commutative case. This is linked to a well-known question in the theory of semigroups, first answered by A.I. Mal'cev in \cite{Malcev37}, and serves as a fundamental motivation
for the present paper, in that
it shows that the study of sumsets in cancellative semigroups cannot be
systematically reduced,
in the absence of commutativity, to the case of groups (at the very least, not in any
obvious way).

On the other hand, it is true that every cancellative semigroup can be embedded into a cancellative monoid (through the unitization process mentioned in the comments preceding the statement of Theorem \ref{th:karolyi_abelian}, in Section \ref{sec:intro}), to the effect that, for the \textit{specific purposes of the manuscript}, we could have assumed in most of our statements that the ``ambient'' is a monoid rather than a semigroup, but we did differently for the assumption is not really necessary. We will see, however, that certain parts take a simpler form when an identity is made available
somehow, as in the case of lemmas in Section \ref{sec:preliminaries} or in the proof of Theorem \ref{th:strong_main}.
\end{remark}
We provide two applications of Theorem \ref{th:strong_main} in Section \ref{sec:corollaries} (others will be investigated in future work): The first is a generalization of Theorem \ref{th:chowla_theorem}, the second is an improvement on a previous result by \O{}.J. R\o{}dseth \cite[Section 6]{Rodseth} relying on Hall's ``marriage theorem''. As for the former (which is stated below), we will use the following specific notation: Given $m \in \mathbb N^+$ and a non-empty $Z \subseteq \mathbb Z/m\mathbb Z$, we let
\begin{equation}
\label{equ:delta}
\delta_Z := \min_{z_0 \in Z} \max_{z \in Z \setminus \{z_0\}}  \gcd(m, z - z_0)
\end{equation}
if $|Z| \ge 2$, and $\delta_Z := 1$ otherwise. Based on this, the next result holds:
\begin{corollary}\label{th:chowla_plus}
For an integer $m \ge 1$ let $X$ and $Y$ be non-empty subsets of $\mathbb Z/m\mathbb Z$ and define $\delta := \min(\delta_X, \delta_Y)$. Then,
$|X+Y| \ge \min(\delta^{-1} m,
|X|+|Y|-1)$.
More in particular, $|X+Y| \ge \min(m, |X|+|Y|-1)$ if there exists $y_0
\in Y$ such that $m$ is prime with $y - y_0$ for
each $y \in Y \setminus \{y_0\}$ (or dually with $X$ in place of $Y$).
\end{corollary}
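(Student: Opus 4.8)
The plan is to derive this as a direct consequence of Corollary \ref{cor:2} applied to $\mathbb A := \mathbb Z/m\mathbb Z$. Since $\mathbb Z/m\mathbb Z$ is a finite commutative group, it is cancellative and every one of its subsemigroups is trivially commutative, so both $\langle X \rangle$ and $\langle Y \rangle$ are abelian and Corollary \ref{cor:2} gives $|X+Y| \ge \Omega(X,Y)$. Because $X$ and $Y$ are finite and non-empty, Definition \ref{def:davenport_constant} yields $\Omega(X,Y) = \min(\max(\omega(X),\omega(Y)),\,|X|+|Y|-1)$, so the whole first assertion will reduce to the single inequality $\max(\omega(X),\omega(Y)) \ge \delta^{-1}m$.

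The key computation is the evaluation of $\omega(Z)$ for a non-empty $Z \subseteq \mathbb Z/m\mathbb Z$. As $\mathbb Z/m\mathbb Z$ is a group, every element is a unit, so $Z \cap A^\times = Z$; and as $Z$ is finite, the supremum and infimum in \eqref{equ:davenport_constant} are attained, whence $\omega(Z) = \max_{z_0 \in Z} \inf_{z \in Z \setminus \{z_0\}} \ord(z - z_0)$. If $|Z| = 1$ the inner infimum is over the empty set and so equals $|\mathbb N|$ by the conventions of Section \ref{sec:statements}, giving $\omega(Z) = |\mathbb N|$. If $|Z| \ge 2$, I would use that the order of $t$ in the additive group $\mathbb Z/m\mathbb Z$ is $\ord(t) = m/\gcd(m,t)$ (a quantity depending only on the residue of $t$), together with the elementary fact that $t \mapsto m/t$ is decreasing, to obtain
\begin{displaymath}
\omega(Z) = \max_{z_0 \in Z} \frac{m}{\max_{z \in Z \setminus \{z_0\}} \gcd(m, z - z_0)} = \frac{m}{\min_{z_0 \in Z} \max_{z \in Z \setminus \{z_0\}} \gcd(m, z - z_0)} = \delta_Z^{-1} m,
\end{displaymath}
where the last equality is the definition \eqref{equ:delta} of $\delta_Z$.

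With this in hand I would combine the two cases. If both $|X| \ge 2$ and $|Y| \ge 2$, then $\max(\omega(X),\omega(Y)) = \max(\delta_X^{-1}m, \delta_Y^{-1}m) = \delta^{-1}m$, since $\delta = \min(\delta_X,\delta_Y)$ and inverting reverses the order. If instead one of $X,Y$ is a singleton, the corresponding $\omega$-value equals $|\mathbb N|$, so $\max(\omega(X),\omega(Y)) = |\mathbb N| \ge \delta^{-1}m$. In either case $\max(\omega(X),\omega(Y)) \ge \delta^{-1}m$, whence $\Omega(X,Y) \ge \min(\delta^{-1}m,\,|X|+|Y|-1)$ and the first inequality follows.

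Finally, for the ``more in particular'' clause, suppose some $y_0 \in Y$ satisfies $\gcd(m, y - y_0) = 1$ for every $y \in Y \setminus \{y_0\}$. If $|Y| \ge 2$ this forces the inner maximum in \eqref{equ:delta} to equal $1$ at that $y_0$, so $\delta_Y = 1$ (each gcd being at least $1$); if $|Y| = 1$ then $\delta_Y = 1$ by convention. Either way $\delta = \min(\delta_X,\delta_Y) = 1$, so $\delta^{-1}m = m$ and the general bound specializes to $|X+Y| \ge \min(m,\,|X|+|Y|-1)$; the dual statement with $X$ in place of $Y$ follows by the symmetry of $\delta$. I do not expect a genuine obstacle here, the result being essentially a translation of Corollary \ref{cor:2}; the only point demanding care is the bookkeeping of the singleton cases and of the conventions $\sup\emptyset = 0$, $\inf\emptyset = |\mathbb N|$, which is exactly why routing the argument through the symmetric constant $\Omega(X,Y)$ is convenient.
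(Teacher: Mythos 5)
Your proposal is correct and takes essentially the same route as the paper: both rest on the computation $\ord(z - z_0) = m/\gcd(m, z - z_0)$, giving $\omega(Z) = \delta_Z^{-1} m$, fed into the corollaries of Theorem \ref{th:strong_main}. The only cosmetic difference is that you invoke the symmetric Corollary \ref{cor:2} once and dispatch singletons via the convention $\inf \emptyset = |\mathbb N|$, whereas the paper applies Corollaries \ref{prop:groupal_case} and \ref{cor:1} separately and dismisses the singleton case as trivial at the outset.
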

In fact, Corollary \ref{th:chowla_plus} contains Chowla's theorem (Theorem \ref{th:chowla_theorem}) as a special case: With the same notation as above, it is enough to assume that the identity of $\mathbb Z/m\mathbb Z$ belongs to $Y$, and $\gcd(m,y) = 1$ for each non-zero $y \in Y$. Furthermore, it is clear from \eqref{equ:delta} that the result is a strengthening of Pillai's theorem (Theorem \ref{th:pillai_theorem}).

Many questions
arise. Most notably: Is it possible to further extend Corollary \ref{cor:2}
in such a way to get rid of
the assumption that summands generate commutative subsemigroups?
This sounds particularly significant, for a positive answer would provide a
comprehensive generalization of
about all the extensions of the Cauchy-Davenport theorem reviewed in
Section \ref{sec:intro}, and remarkably of Theorems
\ref{th:karolyi_finite} and \ref{th:CD_for_torsion-free_groups}.
%
\section{Preliminaries}\label{sec:preliminaries}
This section collects basic results used later to
introduce the
generalized Davenport transforms and prove Theorem \ref{th:strong_main}. Some
proofs
are direct and standard (and thus omitted without further explanation),
but we have no reference to anything
similar in the context of semigroups, so we include them
here for completeness.
\begin{lemma}\label{lem:trivialities}
Pick $n \in \mathbb N^+$ and subsets $X_1, Y_1,\ldots, X_n, Y_n$ of $A$ such that $X_i \subseteq Y_i$ for each $i$. Then, $\sum_{i=1}^n X_i
\subseteq \sum_{i=1}^n Y_i$ and $\big|\sum_{i=1}^n X_i\big| \leq \big|\sum_{i=1}^n Y_i\big|$.
\end{lemma}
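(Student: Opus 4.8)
The plan is to prove the set inclusion by a direct element-chase and then read off the cardinality inequality from the monotonicity of cardinality under inclusion. First I would recall that, by associativity of $+$, the iterated sumset $\sum_{i=1}^n X_i$ is the well-defined set of all sums $x_1 + \cdots + x_n$ with $x_i \in X_i$ for each $i$, independently of any parenthesization; this is exactly what licenses the notation $\sum_{i=1}^n Z_i$ introduced in Section \ref{sec:intro}.

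For the inclusion, I would take an arbitrary $z \in \sum_{i=1}^n X_i$ and write $z = x_1 + \cdots + x_n$ with $x_i \in X_i$ for each $i$. Since $X_i \subseteq Y_i$ by hypothesis, every $x_i$ lies in $Y_i$, whence $z = x_1 + \cdots + x_n \in \sum_{i=1}^n Y_i$; as $z$ was arbitrary, this yields $\sum_{i=1}^n X_i \subseteq \sum_{i=1}^n Y_i$. An equivalent route is a short induction on $n$: the base case $n = 1$ is the hypothesis itself, and the inductive step collapses to the two-summand assertion $X_1 + X_2 \subseteq Y_1 + Y_2$, which is again a one-line element-chase. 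Either way the argument is forced.

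The cardinality inequality $\big|\sum_{i=1}^n X_i\big| \le \big|\sum_{i=1}^n Y_i\big|$ then follows at once, since for any sets $S \subseteq T$ the inclusion map $S \hookrightarrow T$ is injective, so $|S| \le |T|$ regardless of whether the sets are finite or infinite. I do not expect any real obstacle here: the statement is a routine monotonicity property recorded for later use, and the only point worth flagging is that associativity is precisely what makes the $n$-fold sumset meaningful in the first place, so that the element $x_1 + \cdots + x_n$ may be manipulated without committing to a parenthesization.
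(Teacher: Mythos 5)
Your proof is correct and is exactly the direct, standard argument the paper has in mind: the paper explicitly omits the proof of this lemma as routine, and an element-chase for the inclusion followed by monotonicity of cardinality under inclusion is the forced route. Your remark that associativity is what makes the unparenthesized $n$-fold sum meaningful is accurate and consistent with the paper's conventions.
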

\begin{lemma}\label{lem:translation_invariance}
Assume $\mathbb A$ is cancellative and pick an integer $n \ge 2$ and non-empty $X_1, \ldots, X_n \subseteq A$. Then,
$\big|\sum_{i = 2}^n X_i\big| \le \big|\sum_{i=1}^n X_i\big|$ and $\big|\sum_{i = 1}^{n-1} X_i\big| \le \big|\sum_{i=1}^n X_i\big|$.
\end{lemma}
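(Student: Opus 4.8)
The plan is to reduce each of the two inequalities to the injectivity of a suitable one-sided translation, and then to invoke Lemma \ref{lem:trivialities} to compare cardinalities. I will treat the first inequality in detail and deduce the second by a symmetric argument.

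For the first inequality, set $S := \sum_{i=2}^n X_i$. I would begin by recording that $S \neq \emptyset$: the sumset of finitely many non-empty subsets of a semigroup is non-empty (this can be seen by an immediate induction, since $x + y \in X + Y$ whenever $x \in X$ and $y \in Y$ with $X, Y \neq \emptyset$). Because $X_1 \neq \emptyset$, I may then fix an element $x_1 \in X_1$ and consider the left translation $\lambda \colon A \to A$, $s \mapsto x_1 + s$. The key step is that $\lambda$ is injective: if $x_1 + s = x_1 + s'$ for some $s, s' \in A$, then left cancellativity forces $s = s'$. Consequently $|S| = |x_1 + S|$, where $x_1 + S = \{x_1\} + S$. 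Since $\{x_1\} \subseteq X_1$ and $S \subseteq S$, Lemma \ref{lem:trivialities} (applied to the two summands $\{x_1\} \subseteq X_1$ and $S \subseteq S$) gives $\{x_1\} + S \subseteq X_1 + S = \sum_{i=1}^n X_i$ and hence $|\{x_1\} + S| \le \big|\sum_{i=1}^n X_i\big|$. Chaining these relations yields $\big|\sum_{i=2}^n X_i\big| = |S| = |x_1 + S| \le \big|\sum_{i=1}^n X_i\big|$, as desired.

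The second inequality is entirely symmetric. I would set $T := \sum_{i=1}^{n-1} X_i$, note as above that $T \neq \emptyset$, fix an element $x_n \in X_n$, and this time use the right translation $\rho \colon A \to A$, $t \mapsto t + x_n$. Here it is right cancellativity that guarantees the injectivity of $\rho$, giving $|T| = |T + x_n|$ with $T + x_n = T + \{x_n\}$. Since $\{x_n\} \subseteq X_n$, Lemma \ref{lem:trivialities} yields $T + \{x_n\} \subseteq T + X_n = \sum_{i=1}^n X_i$, whence $\big|\sum_{i=1}^{n-1} X_i\big| = |T| \le \big|\sum_{i=1}^n X_i\big|$.

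I do not anticipate any genuine obstacle: the argument is elementary, and the only points demanding a little care are to invoke the correct one-sided form of cancellativity for each inequality (left cancellation when prepending $X_1$, right cancellation when appending $X_n$), and to verify that the relevant sumsets are non-empty so that the translating elements $x_1$ and $x_n$ can indeed be selected. In the monoid case one could streamline the exposition slightly, but the hypothesis $n \ge 2$ together with non-emptiness of all the $X_i$ makes no appeal to an identity necessary.
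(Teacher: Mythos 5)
Your proof is correct: the paper itself omits the proof of this lemma as ``direct and standard,'' and your argument --- fixing $x_1 \in X_1$ (resp.\ $x_n \in X_n$) and using left (resp.\ right) cancellativity to make the translation $s \mapsto x_1 + s$ (resp.\ $t \mapsto t + x_n$) injective, then comparing via Lemma~\ref{lem:trivialities} --- is exactly the standard argument the author intends. The care you take over the one-sided forms of cancellation and the non-emptiness hypotheses is appropriate, and nothing further is needed.
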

For the next lemma, whose proof is straightforward by a routine induction, we assume that $0 \cdot \kappa := \kappa \cdot 0 := 0$ for every cardinal $\kappa$.
\begin{lemma}\label{lem:upper_bounds}
For $n \in \mathbb N^+$ and $X_1, \ldots, X_n \subseteq A$  it holds $\big|\sum_{i=1}^n X_i\big| \le
\prod_{i=1}^n |X_i|$.
\end{lemma}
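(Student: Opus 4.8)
The plan is to argue by induction on $n$, exactly as the statement anticipates; the whole content sits in the two-set case, from which the general bound falls out in one line. The underlying idea is that every element of a sumset arises from a choice of one representative in each summand, so the number of distinct sums cannot exceed the number of such choices.

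First I would isolate the two-set inequality: for any $S, T \subseteq A$, the addition map $\mu\colon S \times T \to A$, $(s,t) \mapsto s+t$, is well-defined (by associativity of $+$) and, by the very definition of the sumset, has image exactly $S+T$. Since the image of any set under a function has cardinality at most that of the domain, $|S+T| = |\mu(S\times T)| \le |S \times T| = |S|\cdot|T|$, the last equality being the usual product rule for cardinals. When $S$ or $T$ is empty the sumset is empty and the right-hand side vanishes under the convention $0\cdot\kappa = \kappa\cdot 0 = 0$, so the inequality persists; the same surjection argument covers the case of infinite factors, since $|\mu(S\times T)| \le |S\times T|$ needs no finiteness hypothesis.

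With this in hand, the base case $n=1$ is the trivial $|X_1| \le |X_1|$. For the inductive step I would write $\sum_{i=1}^{n+1} X_i = \big(\sum_{i=1}^{n} X_i\big) + X_{n+1}$ (again legitimate by associativity), apply the two-set bound with $S = \sum_{i=1}^{n} X_i$ and $T = X_{n+1}$, and then invoke the induction hypothesis $|S| \le \prod_{i=1}^{n}|X_i|$ to obtain $\big|\sum_{i=1}^{n+1} X_i\big| \le |S|\cdot|X_{n+1}| \le \prod_{i=1}^{n+1}|X_i|$, which closes the induction.

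There is no genuine obstacle here: the only point requiring attention is the cardinal bookkeeping at the boundary, namely checking that the convention $0\cdot\kappa = 0$ makes the estimate hold vacuously whenever some $X_i$ is empty, and that both the two-set inequality and the Cartesian product rule remain valid when a factor is infinite. As noted, the surjective map $\mu$ handles all of these uniformly, so the argument is clean and fully routine.
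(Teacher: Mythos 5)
Your proof is correct and follows exactly the route the paper intends: the paper omits the proof, declaring it ``straightforward by a routine induction'' under the same convention $0 \cdot \kappa = \kappa \cdot 0 = 0$, and your surjection argument $S \times T \to S+T$ for the two-set case plus induction is the standard instantiation of that remark (with the cardinal product rule for infinite sets covered by the axiom of choice, which the paper assumes in the background).
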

Let $X,Y \subseteq A$. No matter if $\mathbb A$ is cancellative, nothing similar to Lemmas
\ref{lem:translation_invariance} and \ref{lem:upper_bounds} applies, in general,
to the difference set $X-Y$, in the sense that this can be infinite even if both of $X$ and $Y$ are finite. On another hand, we get by symmetry and Lemma \ref{lem:translation_invariance} that, in the presence of
cancellativity, the cardinality of the sumset $X+Y$ is
preserved under translation, namely $|z+X+Y| = |X+Y+z| = |X+Y|$ for every $z \in A$. This is a point in
common with the case of groups, save for the fact that we cannot profit from it, at least in general, to ``normalize'' either
of $X$ or $Y$ in such a way as to contain some distinguished element of $A$.
\begin{lemma}\label{lem:incremental}
Let $X$ and $Y$ be subsets of $A$. The
following are equivalent:
\begin{enumerate}[label={\rm(\roman{*})}]
\item\label{item:lem:incremental_i} $X + 2Y \subseteq X + Y$.
\item\label{item:lem:incremental_ii} $X + nY \subseteq X + Y$ for all $n \in \mathbb N^+$.
\item\label{item:lem:incremental_iii} $ X + \langle Y \rangle = X+Y$.
\end{enumerate}
\end{lemma}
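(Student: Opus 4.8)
The plan is to prove the cyclic chain \ref{item:lem:incremental_i} $\Rightarrow$ \ref{item:lem:incremental_ii} $\Rightarrow$ \ref{item:lem:incremental_iii} $\Rightarrow$ \ref{item:lem:incremental_i}, using throughout the associativity of sumset formation (inherited from the associativity of $+$) together with the monotonicity provided by Lemma \ref{lem:trivialities}. The one structural fact I would isolate at the outset is the description $\langle Y \rangle = \bigcup_{n \ge 1} nY$: since $\langle Y \rangle$ is the smallest subsemigroup containing $Y$, it consists exactly of all finite sums $y_1 + \cdots + y_n$ with each $y_i \in Y$, that is, of the union of the iterated sumsets $nY$. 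Combining this with the elementary fact that sumsets distribute over arbitrary unions in the right-hand argument, I obtain $X + \langle Y \rangle = \bigcup_{n \ge 1}(X + nY)$, which makes the passage to \ref{item:lem:incremental_iii} transparent.

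For \ref{item:lem:incremental_i} $\Rightarrow$ \ref{item:lem:incremental_ii} I would argue by induction on $n$. The cases $n = 1$ (a trivial equality) and $n = 2$ (the hypothesis) are immediate. For the inductive step, assuming $X + nY \subseteq X + Y$ for some $n \ge 2$, I would write $X + (n+1)Y = (X + nY) + Y$, apply Lemma \ref{lem:trivialities} to the inclusion $X + nY \subseteq X + Y$ to get $(X + nY) + Y \subseteq (X + Y) + Y = X + 2Y$, and then close with the hypothesis $X + 2Y \subseteq X + Y$. This yields $X + (n+1)Y \subseteq X + Y$ and completes the induction.

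The implication \ref{item:lem:incremental_ii} $\Rightarrow$ \ref{item:lem:incremental_iii} is then read off from the union formula: by \ref{item:lem:incremental_ii} each $X + nY$ is contained in $X + Y$, so $X + \langle Y \rangle = \bigcup_{n \ge 1}(X + nY) \subseteq X + Y$, while the reverse inclusion $X + Y \subseteq X + \langle Y \rangle$ holds because $Y \subseteq \langle Y \rangle$ (again Lemma \ref{lem:trivialities}); hence equality. Finally, \ref{item:lem:incremental_iii} $\Rightarrow$ \ref{item:lem:incremental_i} is immediate, since $2Y \subseteq \langle Y \rangle$ forces $X + 2Y \subseteq X + \langle Y \rangle = X + Y$.

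None of the steps presents a genuine obstacle; the only point demanding a little care is the verification that $\langle Y \rangle = \bigcup_{n \ge 1} nY$ and that this union commutes with adjoining $X$ on the left. These are elementary, but I would state them explicitly, both because the lemma is meant to be used as a black box in later sections and because we are working in the bare setting of semigroups (in particular, \emph{cancellativity is nowhere needed} for this equivalence, and I would note this to keep the hypotheses honest).
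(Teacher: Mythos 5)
Your proposal is correct and follows essentially the same route as the paper: both rest on the identity $X + \langle Y \rangle = \bigcup_{n=1}^{\infty}(X + nY)$ to link \ref{item:lem:incremental_ii} and \ref{item:lem:incremental_iii}, and both close the loop with the same induction, via $X+(n+1)Y = (X+nY)+Y \subseteq (X+Y)+Y = X+2Y \subseteq X+Y$. Your cyclic arrangement of the implications and your explicit remark that cancellativity is never used are only cosmetic differences from the paper's proof.
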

\begin{proof}
Points \ref{item:lem:incremental_ii} and \ref{item:lem:incremental_iii} are clearly equivalent,
as $X + \langle Y \rangle = \bigcup_{n=1}^\infty (X + nY)$, and
\ref{item:lem:incremental_i} is obviously implied by \ref{item:lem:incremental_ii}. Thus, we are left to prove that \ref{item:lem:incremental_ii} follows
from \ref{item:lem:incremental_i}, which is immediate (by induction) using that, if
$X+nY \subseteq X+Y$ for some $n \in \mathbb N^+$, then $X+(n+1)Y = (X+nY) + Y
\subseteq (X+Y) +
Y = X + 2Y \subseteq X+Y$.
\end{proof}
The above result is as elementary as central in the plan of the paper, for the properties of the generalized Davenport transform used later, in Section \ref{sec:main_results}, in the proof of Theorem \ref{th:strong_main} are strongly dependent on it.

On another hand, the following lemma shows that, in reference to Theorem
\ref{th:strong_main},
there is no loss of generality in assuming that the ambient semigroup is unital, for any semigroup embeds as a subsemigroup into its unitization.
\begin{lemma}\label{lem:embeddings}
Let $ (B, \star)$ be a
semigroup, $\varphi$ an injective function from $A$ to $B$ such that $\varphi(z_1+z_2) = \varphi(z_1) \star \varphi(z_2)$ for all $z_1, z_2 \in A$, and $X_1,\ldots, X_n \subseteq A$ ($n \in \mathbb N^+$). Then, $|X_1 + \cdots + X_n| = |\varphi(X_1) \star \cdots \star \varphi(X_n)|$.
\end{lemma}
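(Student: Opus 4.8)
The plan is to reduce the cardinality equality to a single set identity together with the injectivity of $\varphi$. Concretely, I would first establish that $\varphi$ carries the sumset to the $\star$-sumset, i.e.
\[
\varphi(X_1 + \cdots + X_n) = \varphi(X_1) \star \cdots \star \varphi(X_n),
\]
where, as usual, $\varphi(X_i)$ denotes the image $\{\varphi(x) : x \in X_i\}$. Once this is in hand, injectivity of $\varphi$ means that its restriction to $X_1 + \cdots + X_n$ is a bijection onto $\varphi(X_1 + \cdots + X_n)$, so the two sides of the claimed equation have the same cardinality; the result then follows at once.

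The heart of the argument is the displayed set identity, which I would prove by induction on $n$. The base case $n = 1$ is a tautology. For the inductive step it is enough to treat the two-set case, namely $\varphi(U + V) = \varphi(U) \star \varphi(V)$ for arbitrary $U, V \subseteq A$: indeed $X_1 + \cdots + X_n = (X_1 + \cdots + X_{n-1}) + X_n$, so applying the two-set case with $U = X_1 + \cdots + X_{n-1}$ and $V = X_n$, and then the inductive hypothesis to $\varphi(U)$, gives the general statement. The two-set case itself is a routine double inclusion driven by the homomorphism property at the level of elements, $\varphi(u + v) = \varphi(u) \star \varphi(v)$: if $w \in \varphi(U + V)$ then $w = \varphi(u + v) = \varphi(u) \star \varphi(v) \in \varphi(U) \star \varphi(V)$ for suitable $u \in U$, $v \in V$, and the reverse inclusion runs along exactly the same identity read backwards.

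There is essentially no serious obstacle here, which is why the lemma is stated without proof elsewhere; the only points requiring a modicum of care are bookkeeping ones. First, the degenerate situation in which some $X_i$ is empty is absorbed automatically, since then both $X_1 + \cdots + X_n$ and $\varphi(X_1) \star \cdots \star \varphi(X_n)$ are empty and the cardinalities agree trivially. Second, I would emphasize that injectivity of $\varphi$ is invoked \emph{only} at the final step: the set identity above holds for any homomorphism, but it is the injectivity that upgrades it from a statement about images to the desired equality of cardinalities, which is precisely the feature needed in Remark~\ref{rem:malcev} to pass, without loss of generality, to the unitization of $\mathbb A$.
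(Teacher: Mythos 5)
Your argument is correct: the set identity $\varphi(X_1 + \cdots + X_n) = \varphi(X_1) \star \cdots \star \varphi(X_n)$ by induction on $n$ via the homomorphism property, followed by injectivity to transfer cardinalities, is exactly the routine proof the paper has in mind, this lemma being one of those it explicitly omits as ``direct and standard''. Your attention to the empty-set case and to the precise point where injectivity enters is sound and requires no correction.
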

We close the section with a few properties of units. Here and later, given $X \subseteq A$ we use ${\rm C}_A(X)$ for the centralizer of $X$ in $\mathbb A$, namely the set of all $z \in A$ such that $z + x = x + z$ for every $x \in X$.
\begin{lemma}\label{lem:inverses}
Let $\mathbb A$ be a monoid, $X$ a subset
of $A$, and $z$ a unit of $\mathbb A$ with inverse $\tilde
z$. Then the following conditions hold:
\begin{enumerate}[label={\rm(\roman{*})}]
\item\label{item:lem:inverses_i} $X - z = X + \tilde z$, $- z + X = \tilde z + X$ and $|-z
+ X| = |X-z| = |X|$.
\item\label{item:lem:inverses_iii} If $z \in {\rm C}_A(X)$ then $\tilde z \in {\rm
C}_A(X)$; in addition to this, $\langle X - z \rangle$ and
$\langle -z+X\rangle$ are commutative if $\langle X
\rangle$ is commutative.
\end{enumerate}
\end{lemma}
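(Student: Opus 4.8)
The plan is to prove the two assertions of Lemma~\ref{lem:inverses} in turn, working entirely inside the monoid $\mathbb A$ and exploiting the defining relations $z + \tilde z = \tilde z + z = 0$ together with the definitions of the difference sets $X - z$ and $-z + X$.

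For part~\ref{item:lem:inverses_i}, I would first unwind the definition of $X - z$. By our conventions, $X - z = \{w \in A : (w + z) \cap X \ne \emptyset\}$, i.e.\ $w \in X - z$ iff $w + z \in X$. The key step is to show this is equivalent to $w \in X + \tilde z$. Indeed, if $w + z \in X$, then adding $\tilde z$ on the right gives $w = w + 0 = w + z + \tilde z \in X + \tilde z$; conversely, if $w = x + \tilde z$ for some $x \in X$, then $w + z = x + \tilde z + z = x + 0 = x \in X$. This double inclusion yields $X - z = X + \tilde z$, and the symmetric computation, adding $\tilde z$ on the left and using $\tilde z + z = 0$, gives $-z + X = \tilde z + X$. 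For the cardinality claim, I would observe that right-translation by $\tilde z$ is injective on $A$ (since $\tilde z$ is a unit, or more simply because $\mathbb A$ is cancellative as any monoid-with-inverses locally is, though here it suffices that $w \mapsto w + \tilde z$ has the inverse $w \mapsto w + z$), so it restricts to a bijection from $X$ onto $X + \tilde z = X - z$; hence $|X - z| = |X|$, and symmetrically $|-z + X| = |X|$.

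For part~\ref{item:lem:inverses_iii}, assume $z \in {\rm C}_A(X)$, so $z + x = x + z$ for every $x \in X$. To see $\tilde z \in {\rm C}_A(X)$, fix $x \in X$ and manipulate $\tilde z + x$: using $x + z = z + x$ I would sandwich $x$ between copies of $\tilde z$ and $z$, namely $\tilde z + x = \tilde z + x + 0 = \tilde z + x + (z + \tilde z) = \tilde z + (x + z) + \tilde z = \tilde z + (z + x) + \tilde z = (\tilde z + z) + x + \tilde z = x + \tilde z$, which is exactly the commuting relation for $\tilde z$. For the statement about $\langle X - z \rangle$ and $\langle -z + X \rangle$, I would use part~\ref{item:lem:inverses_i} to rewrite $X - z = X + \tilde z$ and then argue that when $\langle X \rangle$ is commutative and $\tilde z \in {\rm C}_A(X)$, the generators of $\langle X + \tilde z \rangle$ pairwise commute: a typical generator is $x + \tilde z$ with $x \in X$, and for $x, x' \in X$ one computes $(x + \tilde z) + (x' + \tilde z) = (x' + \tilde z) + (x + \tilde z)$ by repeatedly invoking that $x, x', \tilde z$ all commute with one another (the elements of $X$ commute because $\langle X\rangle$ is commutative, and $\tilde z$ commutes with each by the first part of~\ref{item:lem:inverses_iii}). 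Commutativity of a semigroup is inherited from commutativity of a generating set, so $\langle X - z\rangle$ is commutative; the case of $-z + X = \tilde z + X$ is identical.

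The main obstacle, such as it is, lies in being careful about \emph{sides}: the difference sets $X - z$ and $-z + X$ are genuinely distinct in the non-commutative setting, and one must match each to the correct one-sided translate ($X + \tilde z$ versus $\tilde z + X$) and use the correct one of the two unit relations ($z + \tilde z = 0$ versus $\tilde z + z = 0$) at each step. Beyond this bookkeeping the argument is routine, since everything reduces to formal manipulation of the inverse relations and to the fact that commutativity of a subsemigroup is determined on generators.
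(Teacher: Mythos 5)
Your proof is correct and takes essentially the same approach as the paper's: the same unwinding of the definition of $X - z$ to obtain $X - z = X + \tilde z$, the bijection $\xi \mapsto \xi + \tilde z$ for the cardinality claim, the same sandwiching computation for $\tilde z \in {\rm C}_A(X)$, and the same pairwise-commutation of the generators $x + \tilde z$ (the paper merely spells out your appeal to ``commutativity is inherited from a commuting generating set'' as an explicit induction on $k+\ell$). One small caveat: your aside that $\mathbb A$ is ``cancellative as any monoid-with-inverses locally is'' is false for general monoids, but it is harmless since your fallback justification --- that translation by $\tilde z$ is inverted by translation by $z$ --- is exactly the correct argument and the one the paper uses.
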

\begin{proof}
\ref{item:lem:inverses_i} By  symmetry, it suffices to prove that $X - z = X +
\tilde
z$ and $|X-z| = |X|$. As for the first identity, it holds $w \in
X-z$ if and only if there exists $x \in X$ such that $w + z = x$, which in
turn is equivalent to $x + \tilde z =  (w + z) + \tilde z =
w$, namely $w \in X+\tilde z$. In order to conclude, it is then sufficient to observe that the function $A \to A: \xi \mapsto \xi + \tilde z$ is a bijection.

\ref{item:lem:inverses_iii} Pick $z \in {\rm C}_A(X)$ and $x \in X$. It is then seen that $x + \tilde z = \tilde z + x$ if and only if $x = (x + \tilde z) + z = \tilde z + x + z$, and this is certainly
verified as our standing assumptions imply $\tilde z + x + z = \tilde z + z + x = x$. It follows that $\tilde z \in {\rm
C}_A(X)$.

Suppose now that $\langle X
\rangle$ is a commutative semigroup and let $v, w \in \langle X - z
\rangle$. By point \ref{item:lem:inverses_i} above, there exist $k,\ell \in \mathbb N^+$ and $x_1, \ldots, x_k, y_1, \ldots, y_\ell \in X$ such that $v = \sum_{i=1}^k (x_i + \tilde z)$ and $w = \sum_{i=1}^\ell (y_i + \tilde z)$, to the effect that $v + w = w + v$ by induction on $k + \ell$ and the observation that for all $u_1, u_2 \in X$ it holds
\begin{displaymath}
(u_1 + \tilde z) + (u_2 + \tilde z) = u_1 + u_2 +
2\tilde z = u_2 + u_1 + 2\tilde z = (u_2 +
\tilde z) + (u_1 + \tilde z),
\end{displaymath}
where we use that $\tilde
z \in {\rm C}_A(X)$, as proved before, and $\langle X
\rangle$ is commutative. Hence, $\langle X -
z\rangle$ is commutative too, which completes the proof by symmetry.
\end{proof}
\begin{remark}\label{rem:subtle}
There is a subtleness in Definition \ref{def:davenport_constant} which we have ``overlooked'' so far, but should be noticed.
For, suppose that $\mathbb A$ is a monoid and pick $x,y \in A$. In
principle, $x-y$ and $-y+x$ are not elements of $A$: In fact, they are
(difference) \textit{sets}, and no other meaningful interpretation is possible a priori. However, if $y$ is a unit of $\mathbb A$ and $\tilde y$ is the
inverse of $y$, then $x - y = \{x + \tilde y\}$ and $-y + x =
\{\tilde y + x\}$ by point \ref{item:lem:inverses_i} of Lemma \ref{lem:inverses}, and we are allowed
to identify $x - y$ with $x + \tilde y$ and $-y + x$ with $\tilde y + x$, which will
turn to be useful in various places.
\end{remark}
%
%
\section{The Davenport transform revisited}\label{sec:davenport_transform}
%
%
As mentioned in Section \ref{sec:statements}, Davenport's proof
\cite[Statement A]{Daven35} of Theorem \ref{th:basic_CD} is a transformation
proof. Assuming that $\mathbb A$ is a \textit{commutative group}, the idea is to map a pair $(X,Y)$ of non-empty subsets
of $A$
to a new pair $(X, Y^\prime)$, which is smaller than
$(X,Y)$ in an appropriate sense, and specifically such that
\begin{displaymath}
|Y^\prime| <
|Y|,\quad |X + Y^\prime| + |Y| \le |X + Y| + |Y^\prime|.
\end{displaymath}
We then refer to $(X, Y^\prime)$ as a Davenport
transform of $(X,Y)$; see, for instance, \cite{HRodseth}. For this to be possible, the classical approach
requires that $X+2Y \not\subseteq X+Y$ and $0 \in
Y$, to the effect
that $|Y| \ge 2$.

As expected, many difficulties arise when attempting to adapt the
same approach to semigroups, all the more if these are non-commutative.
Even the possibility of embedding a semigroup into a monoid
does not resolve anything, since the fundamental problem is that,
contrary to
the case of groups, cardinality is not preserved ``under subtraction''.
Namely, if $\mathbb A$ is an arbitrary monoid with identity
$0$ (as intended for the rest of the section, unless differently stated), $X$ is a subset of $A$, and $z$ is an element of $A$, then $|X|$, $|X-z|$
and $|-z+X|$ can be greatly different from each other, even supposing that
$\mathbb A$ is cancellative; cf. point \ref{item:lem:inverses_i} of Lemma \ref{lem:inverses}. Thus,
unless $\mathbb A$ is a group in
disguise or, more generally, embeds as a submonoid into a group, we are not
allowed to assume, for instance, that $0 \in Y$ by picking an arbitrary element $y_0 \in Y$ and replacing $(X,Y)$ with the ``shifted'' pair $(X+y_0, -y_0 + Y)$; cf. the comments following Lemma \ref{lem:upper_bounds}.

In fact, the primary goal of this
section is to show that, in spite of these issues, Davenport's original
ideas can be extended and used for a proof of Theorem
\ref{th:strong_main}.

To start with, let $X$ and $Y$ be subsets
of $A$ such that $mX+2Y \not\subseteq X+Y$ for some positive integer $m$. For the sake of brevity, define
\begin{displaymath}
Z := (mX+2Y) \setminus (X+Y).
\end{displaymath}
Our assumptions give $Z \ne \emptyset$. So fix $z \in Z$, and take $x_z \in
(m-1)X$ and $y_z \in Y$ for which $z \in x_z + X + Y + y_z$, where $0X :=
\{0\}$. Finally, set
\begin{equation}\label{equ:defining_Davenport_transformed_pair}
 \tilde Y_z := \{y \in Y: z \in x_z + X + Y + y\},
\quad Y_z := Y
\setminus \tilde Y_z.
\end{equation}
We refer to
$(X, Y_z)$ as a \textit{generalized} Davenport transform of
$(X,Y)$ (relative to $z$), and based on this notation we have the next
proposition:
\begin{proposition}\label{prop:properties_of_the_modified_Davenport_transform}
If $Y_z \ne \emptyset$, then the triple $(X,Y_z, \tilde Y_z)$ satisfies the
following:
\begin{enumerate}[label={\rm(\roman{*})}]
 \item\label{item:D_transform_i} $Y_z$ and $\tilde Y_z$ are non-empty disjoint proper subsets of $Y$,
and $\tilde Y_z = Y \setminus Y_z$.
 \item\label{item:D_transform_ii} If $\mathbb A$ is cancellative, then $(x_z + X +
Y_z) \cup (z - \tilde Y_z) \subseteq x_z + X + Y$.
\item\label{item:D_transform_iii} $(x_z + X
+
Y_z) \cap (z - \tilde Y_z) = \emptyset$ if $\langle Y \rangle$ is commutative.
\item\label{item:D_transform_iv} If $\mathbb A$ is cancellative, then $|z-\tilde Y_z|
\ge |\tilde Y_z|$.
\item\label{item:D_transform_v}  $|X+Y| + |Y_z| \ge |X+Y_z| + |Y|$ if $\mathbb A$ is cancellative and $\langle
Y \rangle$ commutative.
\end{enumerate}
\end{proposition}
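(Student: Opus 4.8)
The plan is to verify the five items in the stated order, since each later assertion builds on the earlier ones. Item \ref{item:D_transform_i} is purely formal: $\tilde Y_z$ is non-empty because $y_z \in \tilde Y_z$ by the very choice of $y_z$, $Y_z$ is non-empty by the standing hypothesis, and $Y_z = Y \setminus \tilde Y_z$ is a definition, so the two sets partition $Y$ into disjoint, non-empty (hence proper) pieces. For item \ref{item:D_transform_ii} I would treat the union one piece at a time. The inclusion $x_z + X + Y_z \subseteq x_z + X + Y$ is immediate from $Y_z \subseteq Y$ and Lemma \ref{lem:trivialities}. For the other piece, unwinding the difference-set notation, $w \in z - \tilde Y_z$ means $w + y = z$ for some $y \in \tilde Y_z$; since $y \in \tilde Y_z$ yields $z \in x_z + X + Y + y$, I can write $z = x_z + a + b + y$ with $a \in X$ and $b \in Y$, and then right-cancel $y$ (this is the first use of cancellativity) to get $w = x_z + a + b \in x_z + X + Y$.

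Item \ref{item:D_transform_iii} is the conceptual heart of the transform and is where commutativity of $\langle Y \rangle$ enters. Arguing by contradiction, suppose some $w$ lies in both sets: then $w = x_z + a + y_1$ with $a \in X$ and $y_1 \in Y_z$, and also $w + y_2 = z$ with $y_2 \in \tilde Y_z$. Substituting gives $z = x_z + a + y_1 + y_2$; since $y_1, y_2 \in Y$ and $\langle Y \rangle$ is commutative, I may exchange them to obtain $z = x_z + a + y_2 + y_1 \in x_z + X + Y + y_1$, which forces $y_1 \in \tilde Y_z$ and contradicts $y_1 \in Y_z = Y \setminus \tilde Y_z$.

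For item \ref{item:D_transform_iv} I would exhibit an explicit injection $\tilde Y_z \to z - \tilde Y_z$. For each $y \in \tilde Y_z$ there is an element $w_y$ with $w_y + y = z$ (unique by right-cancellation), and $w_y \in z - \tilde Y_z$ by definition; the assignment $y \mapsto w_y$ is injective, since $w_{y_1} = w_{y_2}$ gives $w_{y_1} + y_1 = z = w_{y_1} + y_2$ and hence $y_1 = y_2$ by left-cancellation. Item \ref{item:D_transform_v} then follows by assembling the pieces: by translation invariance in a cancellative semigroup (the remark following Lemma \ref{lem:upper_bounds}) one has $|x_z + X + Y| = |X+Y|$ and $|x_z + X + Y_z| = |X + Y_z|$, while items \ref{item:D_transform_ii} and \ref{item:D_transform_iii} exhibit $x_z + X + Y_z$ and $z - \tilde Y_z$ as \emph{disjoint} subsets of $x_z + X + Y$, so that $|X+Y| \ge |X + Y_z| + |z - \tilde Y_z| \ge |X+Y_z| + |\tilde Y_z|$ using item \ref{item:D_transform_iv}; adding $|Y_z|$ to both sides and using $|\tilde Y_z| + |Y_z| = |Y|$ from the partition in item \ref{item:D_transform_i} yields the claim.

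The main obstacle I anticipate is item \ref{item:D_transform_iii}: every other item needs only cancellation or bookkeeping, but the disjointness is exactly the place where the restricted hypothesis (commutativity of $\langle Y \rangle$ alone, not of all of $\mathbb A$) must be used, and one has to be careful that the only reordering performed is the swap of the two elements $y_1, y_2$ of $Y$. A secondary point I would handle deliberately is the final cardinal inequality in item \ref{item:D_transform_v}: by keeping the argument purely additive — adding $|Y_z|$ rather than subtracting $|\tilde Y_z|$ — the conclusion remains valid in cardinal arithmetic whether or not $Y$ is finite.
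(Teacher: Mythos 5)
Your proof is correct and follows the paper's argument essentially verbatim in items \ref{item:D_transform_i}--\ref{item:D_transform_iv}: the same witnesses, the same use of right cancellation in \ref{item:D_transform_ii} and of left cancellation (plus right cancellation for well-definedness) in \ref{item:D_transform_iv}, and the same single exchange of $y_1$ and $y_2$ inside $\langle Y \rangle$ in \ref{item:D_transform_iii}. The one genuine difference is the endgame of item \ref{item:D_transform_v}: the paper first disposes of infinite $Y$ by a separate case analysis, using $|X+Y| \ge \max(|X|,|Y|)$ together with the choice-dependent fact that $|X+Y| = \max(|X|,|Y|)$ when both sets are infinite, and only then, for finite $Y$, runs the inclusion-exclusion computation $|X+Y| \ge |X+Y_z| + |\tilde Y_z| = |X+Y_z| + |Y| - |Y_z|$, where the subtraction forces finiteness. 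Your purely additive bookkeeping --- adding $|Y_z|$ to both sides of $|X+Y| \ge |X+Y_z| + |\tilde Y_z|$ and invoking $|Y_z| + |\tilde Y_z| = |Y|$ from the partition --- uses only that disjoint unions add cardinalities and that cardinal addition is monotone, both valid for arbitrary cardinals, so it absorbs the infinite case and shortens the proof while relying on strictly less cardinal arithmetic than the paper's case split. Two incidental points in your favor: you correctly locate $y_z$ in $\tilde Y_z$ (the paper's text for item \ref{item:D_transform_i} has a slip, asserting ``$\tilde y_z \in Y_z$''), and you cite Lemma \ref{lem:trivialities} for the inclusion $x_z + X + Y_z \subseteq x_z + X + Y$, where the paper mis-cites Lemma \ref{lem:translation_invariance}, which concerns cardinalities rather than containments.
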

\begin{proof}
\ref{item:D_transform_i} $Y_z$ and $\tilde Y_z$ are non-empty because $\tilde y_z \in
Y_z$ by construction. Also,
(\ref{equ:defining_Davenport_transformed_pair}) gives $Y_z, \tilde Y_z
\subseteq Y$ and $Y_z \cap \tilde Y_z = \emptyset$, so that $Y
\setminus Y_z = Y \setminus (Y \setminus
\tilde Y_z) = \tilde Y_z$ and $Y_z, \tilde Y_z
\subsetneq Y$.

\ref{item:D_transform_ii} Since $Y_z \subseteq Y$ by point \ref{item:D_transform_i} above, $x_z+X+Y_z \subseteq
x_z+X+Y$ by Lemma
\ref{lem:translation_invariance}. On the
other hand,
if $w \in z - \tilde Y_z$ then there exists $y \in \tilde Y_z$ such that
$z = w + y$. But $y \in \tilde Y_z$ implies by
(\ref{equ:defining_Davenport_transformed_pair}) that  $z = \tilde w +
y$ for some $\tilde w \in x_z+X+Y$, whence $w = \tilde w$ by right
cancellativity, namely $w \in x_z+X+Y$.

\ref{item:D_transform_iii} Assume the contrary and let $w \in (x_z+ X +
Y_z) \cap (z - \tilde Y_z)$. There then exist $x \in X$, $y_1 \in Y_z$
and $y_2
\in \tilde Y_z$ such that $w = x_z + x + y_1$ and $z = w + y_2$. Using that
$\langle Y
\rangle$ is commutative, it follows that
$z = x_z + x + y_1 + y_2 = x_z + x + y_2 + y_1$,
which in turn implies $y_1 \in \tilde Y_z$ by
(\ref{equ:defining_Davenport_transformed_pair}), since
$Y_z, \tilde Y_z \subseteq Y$ by point \ref{item:D_transform_i}. This is, however, absurd as
$Y_z \cap \tilde Y_z = \emptyset$, by the same point \ref{item:D_transform_i}.

\ref{item:D_transform_iv} We have from
(\ref{equ:defining_Davenport_transformed_pair}) that for each $y \in \tilde Y_z$
there exists $w \in x_z+X+Y$ such that $z = w + y$, and hence $w
\in z - \tilde Y_z$. On the other hand, since $\mathbb A$ is left
cancellative, it cannot happen that $w + y_1 = w + y_2$ for some $w \in
\mathbb A$ and distinct $y_1, y_2 \in \tilde Y_z$. Thus, $\tilde Y_z$ embeds
as a set into $z - \tilde Y_z$, with the result that $|z -
\tilde Y_z| \ge |\tilde Y_z|$.

\ref{item:D_transform_v} Since $\mathbb A$ is cancellative and $X \ne \emptyset$ (otherwise
$Z=\emptyset$), we have $|X+Y| \ge \max(|X|,|Y|)$ by symmetry and Lemma
\ref{lem:translation_invariance}. This implies the claim if $Y$ is infinite, since then either $|X+Y| > |Y|$, and hence
\begin{displaymath}
|X+Y| + |Y_z| = |X| = |X+Y_z| + |Y|,
\end{displaymath}
or instead $|X+Y| = |Y|$, and accordingly
\begin{displaymath}
|X+Y_z| + |Y_z| = |Y| = |X+Y_z| + |Y|.
\end{displaymath}
We are using here the axiom of choice, which is assumed in the background as part of our foundations, to say that $|X+Y|= \max(|X|,|Y|)$ if $X$ and $Y$ are both infinite.
So we are left with the case when $Y$ is
finite, for which the inclusion-exclusion
principle, points \ref{item:D_transform_ii}-\ref{item:D_transform_iv} and Lemma
\ref{lem:translation_invariance} give, by symmetry, that
\begin{displaymath}
\begin{split}
 |X+Y| & = |x_z+X+Y| \ge |x_z+X+Y_z| + |z-\tilde Y_z| = \\
       & = |X+Y_z| + |z-\tilde Y_z| \ge |X+Y_z| + |\tilde Y_z|.
\end{split}
\end{displaymath}
But $\tilde Y_z = Y\setminus Y_z$ and $Y_z \subseteq Y$ by point \ref{item:D_transform_i} above, so in the end we get $|X+Y| \ge |X+Y_z| + |Y| - |Y_z|$, and the proof is complete.
\end{proof}
\begin{remark}\label{rem:applying_Davenport}
To apply the generalized Davenport transform to
Theorem \ref{th:strong_main}, it will be enough to consider the case where
$m = 1$, for which it is easily seen that
$0 \in Y_z$ if $0 \in Y$ (we continue with the notation
from above), as otherwise $z \in X+Y$, contradicting
the fact that $z \in (X+2Y) \setminus (X+Y)$. However, it seems intriguing that
the same machinery can
be used, at least in principle, even if $m \ge 2$ in so far as there is a way
to prove that $Y_z$ is non-empty.
\end{remark}
\section{The proof of the main theorem}
\label{sec:main_results}
Lemma \ref{prop:properties_of_the_modified_Davenport_transform} is used here
to establish the main contribution of the paper.
\begin{proof}[Proof of Theorem \ref{th:strong_main}]
Since every semigroup embeds as a subsemigroup into its unitization, and the unitization
of a cancellative semigroup is cancellative in its own right,
Lemma \ref{lem:embeddings} and Definition
\ref{def:davenport_constant} imply that there is no loss
of generality in assuming, as we do, that $\mathbb A$ is unital.

Thus, suppose by contradiction that the theorem is
false. There then exists a pair $(X,Y)$ of subsets of $A$ for which  $|X+Y| <
\min(\omega(Y), |X|+|Y|-1)$, whence
\begin{equation}\label{equ:cardinality_not_too_small_not_too_large}
2 \le |X|,|Y| < |\mathbb N|.
\end{equation}
In fact, if either of $X$ or $Y$ is a singleton or infinite then $|X+Y| =
\max(|X|,|Y|)$,
and Definition \ref{def:davenport_constant} gives
$|X+Y| = \Omega(X,Y)$, contradicting the standing assumptions. It
follows from (\ref{equ:davenport_constant}) and
(\ref{equ:cardinality_not_too_small_not_too_large}) that
\begin{equation}\label{eq:minimality}
 |X+Y| < \sup_{y_0 \in Y \cap A^\times} \inf_{y \in Y\setminus
\{y_0\}} \ord(y - y_0),\quad
|X+Y| \le |X| + |Y| - 2.
\end{equation}
Again without loss of generality, we also assume that
$|X|+|Y|$ is
minimal over the pairs of subsets of $A$ for which
(\ref{equ:cardinality_not_too_small_not_too_large}) and (\ref{eq:minimality})
are presumed to hold.

Now, since  $|X+Y|$ is finite, thanks to
(\ref{equ:cardinality_not_too_small_not_too_large}) and Lemma
\ref{lem:upper_bounds}, we get by (\ref{eq:minimality}) and the same equation (\ref{equ:cardinality_not_too_small_not_too_large}) that there exists $\tilde y_0 \in Y \cap A^\times$ such that
\begin{equation}\label{eq:rappresentante}
 |X+Y| < \inf_{y \in Y\setminus \{\tilde y_0\}}
\ord(y - \tilde
y_0) = \min_{y \in Y\setminus \{\tilde y_0\}}
\ord(y - \tilde
y_0).
\end{equation}
So letting $0$ be the identity of $ \mathbb A$ and taking $ W_0 :=
Y - \tilde y_0$ imply
\begin{equation}\label{eq:riassuntivo}
  |X+W_0|
< \min_{w \in W_0\setminus \{0\}} \ord(w),\quad
|X+W_0| \le |X|+|W_0| - 2
\end{equation}
in view of (\ref{eq:minimality}) and
(\ref{eq:rappresentante}). In fact, on the one hand $|Y-\tilde y_0| = |Y|$ and $|X+Y-\tilde y_0| =|X+Y|$ by point
\ref{item:lem:inverses_i} of Lemma \ref{lem:inverses}, and on the other hand, $y \in Y \setminus
\{\tilde y_0\}$ only if $y - \tilde y_0 \in (Y - \tilde
y_0)\setminus \{0\}$, as well as $w \in (Y - \tilde
y_0)\setminus \{0\}$ only if $w + \tilde y_0 \in Y \setminus \{\tilde y_0\}$
(see also Remark \ref{rem:subtle}). We claim that
\begin{equation}\label{equ:failed_inclusion}
Z :=
(X+2W_0) \setminus (X+W_0) \ne \emptyset.
\end{equation}
For, suppose the contrary. Then, $X+W_0 = X+\langle W_0 \rangle$ by
Lemma \ref{lem:incremental}, so that
\begin{displaymath}
 |X+W_0| = |X+\langle W_0 \rangle| \ge |\langle W_0
\rangle| \ge \max_{w \in W_0} \ord(w) \ge
\min_{w \in W_0 \setminus \{0\}} \ord(w),
\end{displaymath}
where we use, in particular, Lemma
\ref{lem:translation_invariance} for the first inequality and the fact that
$|W_0| \ge 2$ for the last one. But this contradicts
(\ref{eq:riassuntivo}), so (\ref{equ:failed_inclusion}) is proved.

Pick $z \in Z$ and let
$(X,W_0^\prime)$ be a generalized Davenport transform of $(X,W_0)$ relative to
$z$.
Since $\langle Y \rangle$ is a commutative subsemigroup of $\mathbb A$ (by hypothesis), the same is true for $\langle W_0 \rangle$, by point \ref{item:lem:inverses_iii} of
Lemma \ref{lem:inverses}. Moreover, $0 \in W_0$, and thus
\begin{equation}\label{eq:W0_non_vuoto}
0 \in W_0^\prime \ne \emptyset, \quad W_0^\prime \subsetneq W_0,
\end{equation}
when taking into account
Remark \ref{rem:applying_Davenport} and point \ref{item:D_transform_i} of Proposition
\ref{prop:properties_of_the_modified_Davenport_transform}. As a consequence,
point \ref{item:D_transform_v} of the same Proposition
\ref{prop:properties_of_the_modified_Davenport_transform} yields, together with
(\ref{eq:riassuntivo}),
that
\begin{displaymath}
|X+W_0^\prime| + |W_0| \le |X+W_0| + |W_0^\prime| \le |X|+|W_0| - 2 +
|W_0^\prime|,
\end{displaymath}
which means, since $|W_0| = |Y - \tilde y_0| = |Y| < |\mathbb N|$ by
(\ref{equ:cardinality_not_too_small_not_too_large}) and the above, that
\begin{equation}\label{eq:bound_su_X_e_Y0trasformato}
|X+W_0^\prime| \le |X|+|W_0^\prime| - 2.
\end{equation}
It follows from (\ref{eq:W0_non_vuoto}) that $1 \le |W_0^\prime| < |W_0|$, and
in fact $|W_0^\prime| \ge 2$, as otherwise we would have $
|X| = |X+W_0^\prime| \le |X| - 1$ by (\ref{eq:bound_su_X_e_Y0trasformato}), in contradiction with the fact that $|X| < |\mathbb N|$ by
(\ref{equ:cardinality_not_too_small_not_too_large}). To summarize, we have found that
\begin{equation}\label{eq:summary}
2 \le |W_0^\prime| < |W_0| < |\mathbb N|.
\end{equation}
Furthermore,
(\ref{eq:riassuntivo}) and (\ref{eq:W0_non_vuoto}) entail
that
\begin{equation}\label{eq:towards_a_contradiction}
 |V_0+W_0^\prime| \le |V_0+W_0| < \min_{w \in W_0^\prime
\setminus\{0\}} \ord(w),
\end{equation}
where we use that $\min(C_1) \le \min(C_2)$ if $C_1$ and
$C_2$ are sets of cardinal numbers with $C_2 \subseteq C_1$. Thus, since $0 \in
{W_0^\prime} \cap A^\times$, we get by (\ref{eq:towards_a_contradiction}) that
\begin{equation}
 |X+W_0^\prime| < \sup_{w_0 \in {W_0^\prime} \cap A^\times} \min_{w \in
W_0^\prime \setminus\{w_0\}} \ord(w),
\end{equation}
which is however in contradiction, due to
(\ref{equ:cardinality_not_too_small_not_too_large}),
(\ref{eq:bound_su_X_e_Y0trasformato}) and
(\ref{eq:summary}), with
the minimality of $|X|+|Y|$, for $|W_0^\prime| < |W_0| = |Y|$,
and hence $|X|+|W_0^\prime| < |X|+|Y|$.
\end{proof}
\section{A couple of applications}\label{sec:corollaries}
First, we show how to use Theorem \ref{th:strong_main} to prove the extension of Chowla's theorem for composite moduli mentioned in  Section \ref{sec:statements}.

\begin{proof}[Proof of Corollary \ref{th:chowla_plus}]
The claim is trivial if either of $X$ or $Y$ is a singleton. Otherwise, $\mathbb Z/m\mathbb Z$ being a commutative finite group and $\ord(z-z_0) = m/\gcd(m, z-z_0)$ for $z,z_0 \in \mathbb Z/m\mathbb Z$ imply $|X+Y| \ge \min(\omega(Y),|X|+|Y|-1)$ by Corollary \ref{prop:groupal_case}, where
\begin{displaymath}
\omega(Y) = \max_{y_0 \in Y} \min_{y \in Y \setminus \{y_0\}} \ord(y-y_0) = m \cdot \max_{y_0 \in Y} \min_{y \in Y \setminus \{y_0\}} \frac{1}{\gcd(m,y-y_0)} = \delta_Y^{-1} m.
\end{displaymath}
Now in an entirely similar way, it is found, in view of Corollary \ref{cor:1}, that
\begin{displaymath}
|X+Y| \ge \min(\delta_X^{-1} m, |X|+|Y|-1).
\end{displaymath}
This concludes the proof, considering that $\delta_Y = 1$ if there exists $y_0 \in Y$ such that $m$ is coprime with $y - y_0$ for every $y \in Y \setminus \{y_0\}$ (and symmetrically with $X$).
\end{proof}
We now use P. Hall's  theorem about distinct representatives
\cite{Hall35}
to
say something on how to ``localize'' some elements of a
sumset.
\begin{theorem}[Hall's theorem]
Let $S_1, \ldots, S_k$ be sets ($k \in \mathbb N^+$).
There then exist (pairwise) distinct
elements $s_1, \ldots, s_k$ such that $s_i \in S_i$ if and only if for each $h = 1, \ldots, k$ the union of any $h$ of $S_1, \ldots, S_k$ contains at least
$h$ elements.
\end{theorem}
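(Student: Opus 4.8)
The plan is to prove the substantive (``if'') direction — that Hall's condition forces the existence of distinct representatives — by strong induction on $k$, the ``only if'' direction being immediate: if pairwise distinct $s_1, \ldots, s_k$ with $s_i \in S_i$ exist, then for any $h$ of the indices the corresponding union contains the $h$ distinct elements $s_{i_1}, \ldots, s_{i_h}$, hence has at least $h$ elements. For sufficiency I would fix a family $S_1, \ldots, S_k$ satisfying Hall's condition and induct on $k$. The base case $k = 1$ is trivial, since Hall's condition with $h = 1$ gives $S_1 \ne \emptyset$. For the inductive step I would distinguish two cases according to whether the family contains a \emph{critical} subfamily, i.e.\ some $h$ of the sets, with $1 \le h < k$, whose union has \emph{exactly} $h$ elements.

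In the first case there is no critical subfamily, so every union of $h < k$ of the sets has strictly more than $h$ elements. Then I would pick any $s_k \in S_k$, delete it from the other sets by setting $S_i' := S_i \setminus \{s_k\}$ for $i < k$, and observe that removing a single element from a union carrying a surplus of at least one still leaves at least $h$ elements in any union of $h$ of the $S_i'$. Thus $S_1', \ldots, S_{k-1}'$ satisfies Hall's condition and, by induction, admits distinct representatives, none of which equals $s_k$; adjoining $s_k$ yields the desired system.

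In the second case some subfamily, say $S_1, \ldots, S_h$ after relabelling, is critical with $|S_1 \cup \cdots \cup S_h| = h$. Being a subfamily of one satisfying Hall's condition, it satisfies Hall's condition too, so by induction it has distinct representatives $s_1, \ldots, s_h$; as these are $h$ distinct elements of an $h$-element set, they exhaust $S_1 \cup \cdots \cup S_h$. I would then pass to the residual family $S_j' := S_j \setminus (S_1 \cup \cdots \cup S_h)$ for $h < j \le k$ and verify Hall's condition for it: for any $\ell$ indices $J \subseteq \{h+1, \ldots, k\}$, since $S_1 \cup \cdots \cup S_h$ is disjoint from each $S_j'$,
\begin{displaymath}
\Big|\bigcup_{j \in J} S_j'\Big| = \Big|\bigcup_{i=1}^h S_i \cup \bigcup_{j \in J} S_j\Big| - h \ge (h + \ell) - h = \ell,
\end{displaymath}
where the inequality is Hall's condition applied to the $h + \ell$ original sets $S_1, \ldots, S_h$ together with $\{S_j : j \in J\}$. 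By induction the residual family has distinct representatives $s_{h+1}, \ldots, s_k$, automatically disjoint from $\{s_1, \ldots, s_h\}$ by construction, and the two systems combine into one for the whole family.

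The main obstacle is the bookkeeping in the second case: one must arrange the reduction so that Hall's condition is inherited by \emph{both} the critical subfamily and its complement, and the displayed count — valid precisely because the critical union is removed \emph{entirely} — is what makes the complementary family satisfy the hypothesis. I would also note that only the index set needs to be finite: the argument is insensitive to whether the individual $S_i$ are infinite, since a critical block necessarily has finite union and Case~1 never inspects the size of any $S_i$ beyond non-emptiness.
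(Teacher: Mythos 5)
There is nothing in the paper to compare your argument against: Hall's theorem is stated there as a known external result, cited to Hall's 1935 article, and used as a black box in the proof of the final proposition; the paper contains no proof of it. On its own merits, your proof is correct. It is the classical Halmos--Vaughan induction: the trivial ``only if'' direction, then strong induction on $k$ with the dichotomy between the surplus case (no critical block, pick any $s_k \in S_k$, delete it, Hall's condition survives because every proper union had slack $\ge 1$) and the critical case (a block of $h < k$ sets with union $U$ of size exactly $h$, handled by applying the induction hypothesis separately to the block and to the residual family $S_j' := S_j \setminus U$, the Hall condition for the latter coming from applying the original condition to the $h + \ell$ sets consisting of the block together with any $\ell$ residual sets). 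Both subfamilies have strictly fewer than $k$ sets since $1 \le h < k$, so the induction is sound. One small point of rigor: since the statement in the paper allows the $S_i$ to be infinite, the displayed identity with ``$-\,h$'' involves cardinal subtraction, which is not well defined for infinite cardinals; it should be read through the disjoint decomposition $U \cup \bigcup_{j \in J} S_j = U \sqcup \bigcup_{j \in J} S_j'$ with $|U| = h$ finite, which yields $h + \bigl|\bigcup_{j \in J} S_j'\bigr| \ge h + \ell$ and hence $\bigl|\bigcup_{j \in J} S_j'\bigr| \ge \ell$ by cancelling the finite $h$ --- evidently what you intend, given that you note a critical block necessarily has finite union. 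Your closing observation that only the index set need be finite is accurate and pertinent to the statement as formulated in the paper, even though in the paper's application (the sets $Z_i$ carved out of the rows of the matrix $\alpha(X,Y)$ with $X$ and $Y$ finite) all sets are finite anyway.
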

More precisely, suppose
$\mathbb A$ is a cancellative
semigroup and let $X,Y$ be non-empty finite subsets of $A$  such that $|X+Y| < \omega(Y)$.
Clearly, this implies $Y \cap A^\times \ne \emptyset$. Define $k := |X|$
and $\ell := |Y|$, and let $x_1, \ldots, x_k$
be a numbering of $X$ and $y_1, \ldots, y_\ell$ a numbering of $Y$. Then
consider the $k$-by-$\ell$ matrix, say $\alpha(X,Y)$, whose entry in the $i$-th row and $j$-th column is $x_i + y_j$.
Any element of $X+Y$ appears in $\alpha(X,Y)$, and
viceversa any entry of $\alpha(X,Y)$ is an element of $X+Y$. Also,
Theorem \ref{th:strong_main} and our hypotheses give $|X+Y| \ge k + \ell - 1$.
So it is natural to try to gain some information about
where in the matrix $\alpha(X,Y)$ to look for $k+\ell-1$ distinct elements of $X+Y$. In this respect we have the following proposition, whose proof is quite similar to the one of a weaker result in \cite[Section 6]{Rodseth}, which is, in turn, focused on the less general case of a group of prime order:
\begin{proposition}
 Assume that $\langle Y
\rangle$ is commutative and let $Z$ be any subset of
$X+Y$ of size $\ell-1$, for instance $Z = x_1 + \{y_1, \ldots, y_{
\ell-1}\}$. Then we can choose one element from each row of $\alpha(X,Y)$ in such a way that $Z$ and these elements form a subset of $X+Y$ of size $k+\ell-1$.
\end{proposition}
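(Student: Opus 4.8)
The plan is to reduce the statement to the existence of a system of distinct representatives and then invoke Hall's theorem. For each $i = 1, \ldots, k$, let $S_i := (x_i + Y) \setminus Z$ be the set of entries lying in the $i$-th row of $\alpha(X,Y)$ that avoid $Z$. Choosing ``one element from each row'' so as to enlarge $Z$ to a subset of $X+Y$ of size $k + \ell - 1$ amounts precisely to selecting distinct representatives $s_1, \ldots, s_k$ with $s_i \in S_i$: indeed, each such $s_i$ is an entry of the $i$-th row that lies outside $Z$, and if the $s_i$ are pairwise distinct then $Z \cup \{s_1, \ldots, s_k\}$ is a disjoint union of size $|Z| + k = (\ell - 1) + k$, as required.

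It then remains to verify Hall's condition for the family $(S_1, \ldots, S_k)$. Fix $I \subseteq \{1, \ldots, k\}$ and set $X_I := \{x_i : i \in I\}$, so that $\bigcup_{i \in I} S_i = (X_I + Y) \setminus Z$ and hence $\big|\bigcup_{i \in I} S_i\big| \ge |X_I + Y| - |Z| = |X_I + Y| - (\ell - 1)$. The hypothesis $|X+Y| < \omega(Y)$ forces, via Theorem \ref{th:strong_main}, the bound $|X+Y| \ge |X| + |Y| - 1$, whence $\omega(Y) > |X+Y| \ge |X| + |Y| - 1 \ge |X_I| + |Y| - 1$; in particular $\min(\omega(Y), |X_I| + |Y| - 1) = |X_I| + |Y| - 1$. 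Since $\langle Y \rangle$ is commutative and $X_I, Y$ are finite and non-empty, Theorem \ref{th:strong_main} applied to the pair $(X_I, Y)$ now yields $|X_I + Y| \ge |X_I| + |Y| - 1 = |X_I| + \ell - 1$. Combining these inequalities gives $\big|\bigcup_{i \in I} S_i\big| \ge |X_I| = |I|$, which is exactly Hall's criterion.

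With Hall's condition in hand, Hall's theorem produces distinct $s_1, \ldots, s_k$ with $s_i \in S_i$, and the discussion in the first paragraph shows that $Z \cup \{s_1, \ldots, s_k\}$ is a subset of $X+Y$ of the required size $k + \ell - 1$. I expect the crux to be the \emph{uniform} verification of Hall's condition: the point is not merely the single inequality $|X+Y| \ge k + \ell - 1$, but the observation that $\omega(Y)$, which depends on $Y$ alone, dominates $|X_I| + |Y| - 1$ for every sub-family $X_I$ of $X$ simultaneously, so that the Cauchy-Davenport bound of Theorem \ref{th:strong_main} can be fed into Hall's criterion for all the pairs $(X_I, Y)$ at once.
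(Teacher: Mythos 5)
Your proposal is correct and follows essentially the same route as the paper: both define the row sets $(x_i+Y)\setminus Z$, verify Hall's condition by applying Theorem \ref{th:strong_main} to each sub-family $\{x_i : i \in I\}+Y$, and conclude with Hall's theorem, using the disjointness of $Z$ from the representatives. The only cosmetic difference is how you see that the minimum in Theorem \ref{th:strong_main} is attained at $|X_I|+|Y|-1$: you route through $\omega(Y) > |X+Y| \ge |X|+|Y|-1 \ge |X_I|+|Y|-1$, while the paper uses the monotonicity bound $|X_I+Y| \le |X+Y| < \omega(Y)$ (Lemma \ref{lem:trivialities}) directly --- an equivalent observation.
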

\begin{proof}
For each $i = 1, \ldots, k $ let $Z_i := (x_i +
Y)\setminus Z$ and note
that $Z_i$ is a subset of the $i$-th row of
$\alpha(X,Y)$. Then $
Z_{i_1} \cup
 \cdots \cup Z_{i_h} = (\{x_{i_1},
\ldots, x_{i_h}\} + Y) \setminus Z$ for any positive integer $h \le k$ and all distinct $i_1, \ldots, i_h \in \{1, \ldots, k\}$,
with the result that
\begin{displaymath}
 |Z_{i_1} \cup \cdots \cup Z_{i_h}| \ge |\{x_{i_1}, \ldots,
x_{i_h}\} + Y| - |Z| \ge h + \ell - 1 - (\ell - 1) = h,
\end{displaymath}
thanks to Theorem \ref{th:strong_main} and the fact that $|\{x_{i_1}, \ldots, x_{i_h}\} + Y| \le |X+Y| < \omega(Y)$ by Lemma \ref{lem:trivialities} and the assumption that $|X+Y| < \omega(Y)$. It follows from Hall's theorem that we
can find $k$ distinct elements $z_1, \ldots, z_k$ such that $z_1 \in Z_1, \ldots, z_k \in Z_k$, and these, together with the $\ell-1$ elements of $Z$, provide a total amount of $k+\ell-1$ elements in $X+Y$, since $Z \cap Z_1 = \cdots = Z \cap Z_k = \emptyset$ (by construction).
\end{proof}
\section*{Acknowledgements}
The author thanks Andrea Gagna (Universit\`a di Milano, Italy), Carlo Sanna (Universit\`a di Torino, Italy), Alain
Plagne (\'Ecole polytechnique, France), and an anonymous referee for comments that helped to improve this paper significantly.

\end{document}